\DeclareMathOperator{\sech}{sech}
\DeclarePairedDelimiter{\abs}{\lvert}{\rvert}
\newtheorem{thm}{Theorem}[section]
\newtheorem*{thm*}{Theorem}
\newtheorem{lemma}[thm]{Lemma}
\theoremstyle{definition}
\newtheorem{defn}[thm]{Definition}
\newtheorem{ex}{Example}
\newtheorem{rem}{Remark}
\begin{document}


\newcommand{\marginnote}[1]{\mbox{}\marginpar{\raggedleft\hspace{0pt}#1}}

%
%
\newcommand{\R}{\mathbb R}
\newcommand{\Z}{\mathbb Z}

\newcommand{\Rplus}{\R^+}
\newcommand{\Rnz}{\R^*}

%
%

\newcommand{\rto}{\R^{2,1}}

%
%
\newcommand{\A}{\mathsf A} 

\newcommand{\E}{\mathsf E} 
\newcommand{\V}{\mathsf V} 

\newcommand{\Ht}{\mathsf{H}^2} 

\newcommand{\Fut}{\mathsf{Future}}

%
%
\renewcommand{\o}{\operatorname}

\newcommand{\Oto}{\o{O}(2,1)}
\newcommand{\SOto}{\o{SO}(2,1)}
\newcommand{\SOoto}{\o{SO}^{0}(2,1)}

\newcommand{\SOoo}{\o{SO}^{0}(1,1)}

\newcommand{\GL}{\o{GL}}
\newcommand{\GLp}{\GL^+}

\newcommand{\PSLtR}{\o{PSL}(2,\R)}
\newcommand{\SLtR}{\o{SL}(2,\R)}

\newcommand{\CONF}{{\operatorname{Conf}(\Eint)}}

\newcommand{\Aff}{\o{Aff}^+}
\newcommand{\Conf}{\o{Conf}^+}
\newcommand{\Isom}{\o{Isom}^+}
\newcommand{\Confo}{\o{Conf}^0}

\newcommand{\Ise}{\Isom(\E)}

\newcommand{\IsE}{\o{Isom}^+(\E)}
\newcommand{\ConfE}{\o{Conf}^+(\E)}

%
%
\newcommand{\slr}{\mathfrak{sl}(2)}
\newcommand{\soto}{\mathfrak{so}(2,1)}

%
%
\newcommand{\h}{\mathfrak{h}}     

%
%
\newcommand{\C}{{\mathcal C}} 
\renewcommand{\H}{\mathcal{H}} 
\newcommand{\CP}{\C}
\newcommand{\CHS}[1]{\H(#1)}
\newcommand{\WP}{{\mathcal W}}
\newcommand{\TS}{{\mathcal T}}
\renewcommand{\SS}{{\mathcal S}}

\renewcommand{\lg}{l_\gamma}

%
%
\newcommand{\stem}[1]{{\operatorname{Stem}}(#1)}
\newcommand{\wing}[1]{{\operatorname{Wing}}(#1)}

%
%
\newcommand{\Quad}{\mathsf{V}_{\scriptscriptstyle\neq\origin} }
\newcommand{\allow}[1]{{\mathsf A}(#1)}  

%
%
\newcommand{\Gg}{G} 
\newcommand{\Go}{G_0} 

%
%
\newcommand{\g}{g} 
\newcommand{\ag}{\gamma} 

%
%
\newcommand{\vu}{\mathsf{u}}
\newcommand{\vv}{\mathsf{v}}
\newcommand{\vw}{\mathsf{w}}
\newcommand{\vs}{{\mathsf{s}}}
\newcommand{\vt}{{\mathsf{t}}}
\newcommand{\vn}{{\mathsf{n}}}
\newcommand{\va}{{\mathsf{a}}}
\newcommand{\vb}{{\mathsf{b}}}
\newcommand{\vc}{{\mathsf{c}}}
\newcommand{\vx}{{\mathsf{x}}}
\newcommand{\vy}{{\mathsf{y}}}
\newcommand{\vz}{\mathsf{z}}
\newcommand{\ve}{{\mathsf{e}}}
\newcommand{\vp}{{\mathsf{p}}}

\newcommand{\origin}{\mathsf{0}}

%
%
\newcommand{\dv}{\vu} 
\newcommand{\tp}{\vv} 
\newcommand{\logg}{\vx} 

%
%
\newcommand{\po}{o}

%
%
\newcommand{\xo}[1]{#1^0}
\newcommand{\xp}[1]{#1^+}
\newcommand{\xm}[1]{#1^-}
\newcommand{\xpm}[1]{#1^{\pm}}

\newcommand{\Fix}{\mathsf{Fix}}

%
%
\newcommand{\lx}{X}
\newcommand{\ly}{Y}
\newcommand{\lw}{W}
\newcommand{\lbasis}{E}
\newcommand{\lb}{B}
\newcommand{\lf}{F}

%
%
\newcommand{\conf}[1]{\overline{#1}^{\mbox{\tiny conf}}}

\newcommand{\Det}{\mathsf{Det}}

\renewcommand{\L}{\mathsf{L}}  

\newcommand{\ldot}[2]{#1 \cdot #2 }
\newcommand{\lcross}[2]{#1 \times #2 }

\newcommand{\newalpha}{\tilde{\alpha}}

\newcommand{\sgn}{\mathsf{sgn}}

%
%
\newcommand{\surf}{\Sigma}

\newcommand{\inte}[1]{\operatorname{int}\left(#1\right)}
\newcommand{\cl}[1]{\operatorname{cl}\left(#1\right)}

\title[Crooked foliations]
{Foliations of Minkowski $2+1$ spacetime by crooked planes}

\author[Charette]{Virginie Charette}
    \address{{\it Charette:\/}
    D\'epartement de math\'ematiques\\ Universit\'e de Sherbrooke\\
Sherbrooke,  Qu\'ebec J1K 2R1  Canada}
    \email{v.charette@usherbrooke.ca}
\author[Kim]{Youngju Kim}
  \address{ {\it Kim:\/}
   Korea Institute for Advanced Study\\ Heogirho 85, Dongdaemun-gu Seoul, 130-722 Korea}
   \email{geometer1@kias.re.kr}

\date{\today }

\begin{abstract}
Given a regular curve in Minkowski spacetime, we describe necessary and sufficient conditions for this curve to admit a family of pairwise-disjoint crooked planes.  Using this criterion, we describe crooked foliations along orbit curves of one-parameter groups of Lorentzian isometries.
\end{abstract}

\maketitle

\tableofcontents

Crooked planes are piecewise-linear surfaces in Minkowski spacetime; they were introduced by Drumm to construct fundamental domains for actions of free groups of Lorentzian isometries~\cite{D92}.   They have become quite useful for studying proper actions of free groups in Minkowski spacetime and other spaces, see for instance~\cite{CDG10,CDG11,Go13}.  Goldman asked the following question\,: given a pair of disjoint crooked planes, when can the space between them be foliated by pairwise disjoint crooked planes? The first steps in this direction may be found in~\cite{BCDG}.  In the present note, we construct foliations of Minkowski spacetime by crooked planes, along certain orbits of one-parameter subgroups of isometries.

In the process, we have sharpened the results of~\cite{BCDG} to give a necessary and sufficient condition for a curve in Minkowski spacetime to admit a foliation by crooked planes.  This condition is infinitesimal : it expresses the so-called ``Drumm-Goldman inequality'' for a pair of crooked planes in terms of the derivative of the curve.  In brief, crooked planes along a sufficiently smooth curve are pairwise disjoint if and only if the derivative of the curve belongs to the ``stem quadrant'' of the crooked plane at that point.  These terms will be defined in \S\ref{sec:fol}.

The condition, proved in \S\ref{sec:inf}, is interesting in its own right, since it can be applied to an arbitrary, sufficiently smooth curve. As far as this paper is concerned, we will examine orbit curves of one-parameter groups of Lorentzian isometries.  Specifically, we consider groups of the form $\langle g_t\rangle$, where $g_t$ is the exponential curve of a direction in the Lie algebra of the group of Lorentzian isometries.  The upshot is that all the elements share the same eigenspace, facilitating explicit calculations.  The orbit curves in question will be $\langle g_t\rangle$-orbits of points and we will consider crooked planes whose ``directors'' lie in a particular orbit of the linear parts of $\langle g_t\rangle$.  We consider the case where $g_t$ is hyperbolic as well as parabolic.

In the last section, we apply this to give sufficient criteria for a pair of disjoint crooked planes with ultraparallel directors to admit a foliation between them.  Unfortunately, we cannot provide a full picture for now.  Indeed, we can show that the pair does admit a foliation along as it is {\em calibrated}; we will explain this term in~\S\ref{sec:when}.  It is quite clear that our result can be generalized, for instance by relaxing the conditions on the directors of the crooked planes.  This is work in progress.

\section{Preliminaries}

Let $\V$ denote $\R^3$ endowed with a scalar product of signature $(2,1)$.  To fix ideas, we will usually assume that it takes the following form in the standard basis\,:
\begin{equation*}
\begin{bmatrix}x_1\\ x_2\\ x_3\end{bmatrix}\cdot\begin{bmatrix}y_1\\ y_2\\ y_3\end{bmatrix}=x_1y_1+x_2y_2-x_3y_3.
\end{equation*}
A vector $\vv\neq\origin\in\V$ is called
\begin{itemize}
 \item \emph{timelike} if $\ldot{\vv}{\vv} < 0$,
 \item \emph{null} (or {\em lightlike\/})
 if $\ldot{\vv}{\vv} = 0$,
\item \emph{spacelike} if $\ldot{\vv}{\vv}> 0$; when $\ldot{\vv}{\vv}=1$, it is called {\em unit-spacelike}.
\end{itemize}
The set of null vectors is called the {\em lightcone}.

Say that vectors $\vu,\vv\in\V$ are
{\em Lorentz-orthogonal\/} if $\vu \cdot \vv = 0.$
Denote the linear subspace of vectors Lorentz-orthogonal
to $\vv$ by $\vv^{\perp}$.

Let $\E$ be the affine space modeled on $\V$.  The vector space $\V$, considered as a Lie group, acts transitively on $\E$ by translations as follows\,:
\begin{align*}
\V\times\E & \longrightarrow\E \\
\left(\vt,p\right) & \longmapsto p+\vt.
\end{align*}

Setting $\po=(0,0,0)$, we can write any $p\in\E$ in terms of the $\V$-action on $\E$ by translation\,:
\begin{equation*}
p=\po+\vp
\end{equation*}
for $\vp \in \V$ and we use this to extend the action of a linear map $g:\V\rightarrow\V$ to an affine one\,:
\begin{equation*}
g(p):=\po+g(\vp).
\end{equation*}
Any affine map $\ag$ can be written as $t_\tp\circ g$, where $g$ is linear and $t_\tp$ is translation by the vector $\tp\in\V$.  We call $g$ the {\em linear part} of $\ag$ and denote it by $\L(\ag)$; we call $\tp$ the {\em translational part} of $\ag$.

Let us denote by $\Ise$ the group of orientation-preserving affine isometries that preserve the scalar product.   The linear part of an element of $\Ise$ belongs to $\SOto$, which is isomorphic to the group of isometries of the hyperbolic plane.  In keeping with the terminology of hyperbolic geometry, $g\in\SOto$ is called\,:
\begin{itemize}
\item {\em hyperbolic} if it has three distinct real eigenvalues;
\item {\em parabolic} if 1 is its only eigenvalue;
\item {\em elliptic} otherwise.
\end{itemize}
More generally, $g\in\Ise$ is called, hyperbolic, parabolic or elliptic, according to the nature of its linear part.

Identifying $\Ise$ with the subgroup of $GL(4)$ consisting of matrices of the form\,:
\begin{equation*}
\gamma=\begin{bmatrix} A & \tp \\ 0 & 1\end{bmatrix}
\end{equation*}
where $A\in\SOto$, the Lie algebra of $\Ise$ consists of matrices of the form\,:
\begin{equation*}
X=\begin{bmatrix} \logg & \vy \\ 0 & 0\end{bmatrix}
\end{equation*}
where $\logg\in\soto$. Choose $X$ as above, with $\logg\neq\origin$.  It generates a rank one sub-algebra, and so there is a unique rank one subgroup of $\Ise$, $\langle \ag_t\rangle$, such that $\ag_t=\exp(tX)$.  In fact\,:
\begin{equation*}
\L(\gamma_t)=\exp(t\logg).
\end{equation*}
The linear parts share a common eigensystem (for $t\neq 0$).

\section{Crooked planes and foliations}\label{sec:fol}

Let $\vu\in\V$ be a spacelike vector.  Since $\vu^\perp$ intersects the light cone in a pair of lines, we can form a basis for $\V$ containing $\vu$ and a vector spanning each of these lines.

\begin{defn}\label{def:nullframe}
Let $\vu\in\V$ be spacelike.  The {\em null frame} associated to $\vu$ is the basis $\left(\vu,\xm{\vu},\xp{\vu}\right)$, where $\xpm{\vu}$ are null vectors such that\,:
\begin{itemize}
\item $\vu^\perp=\langle\xm{\vu},\xp{\vu}\rangle$;
\item the third coordinate is 1;
\item $\det\left[ \vu~\xm{\vu}~\xp{\vu}\right]>0$.
\end{itemize}
\end{defn}
The second requirement is simply a useful normalizing condition; other similar conditions appear elsewhere~\cite{BCDG, DG99}.  Alternatively, $\xpm{\vu}$ is ``future-pointing''.

\begin{defn}
Let $p\in\E$ and $\vu\in\V$ be spacelike.  The {\em crooked plane} with {\em vertex} $p$ and {\em director} $\vu$ is the
union of\,:
\begin{itemize}
\item a {\em stem}\,:
\begin{equation*}
p+\{ \vx\in\vu^\perp:\vx\cdot\vx\leq 0\}
\end{equation*}
\item and a pair of  {\em wings}\,:
\begin{align*}
& p+\{ \vx\in\left(\xp{\vu}\right)^\perp:\xp{\vx}=\xp{\vu}\} \\
& p+\{ \vx\in\left(\xm{\vu}\right)^\perp:\xp{\vx}=\xm{\vu}\}.
\end{align*}
\end{itemize}
It is denoted $\CP(p,\vu)$.
\end{defn}

Figure~\ref{fig:cp} shows a crooked plane with director $\begin{bmatrix} 1\\ 0 \\ 0\end{bmatrix}$ and Figure~\ref{fig:pair} shows a pair of disjoint crooked planes.
 \begin{figure}
  \includegraphics{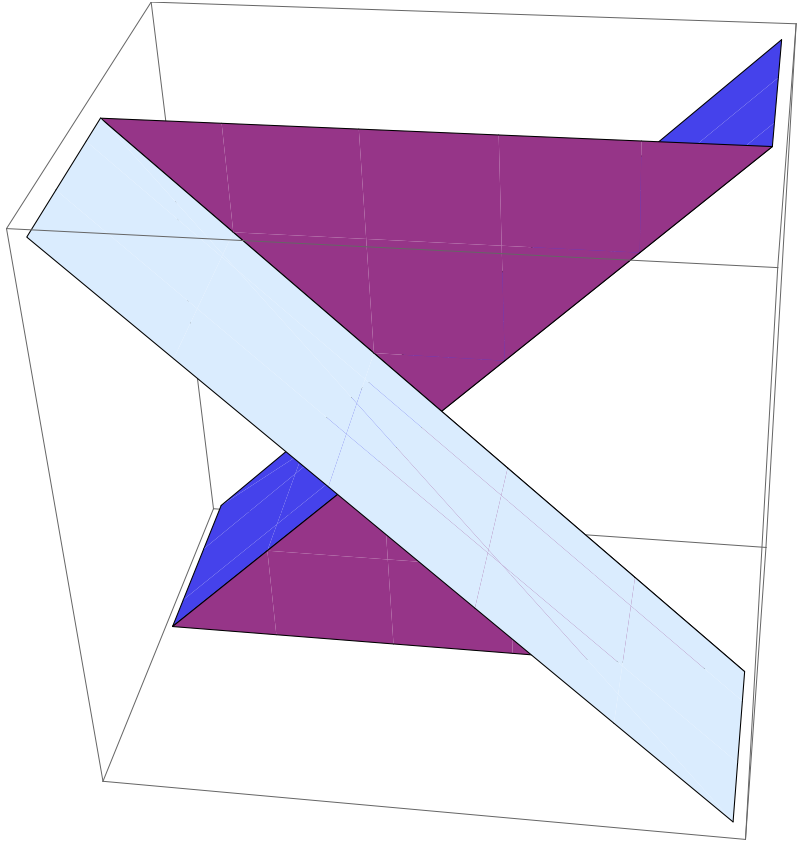}
  \caption{A crooked plane.}
  \label{fig:cp}
  \end{figure}
 \begin{figure}
  \includegraphics{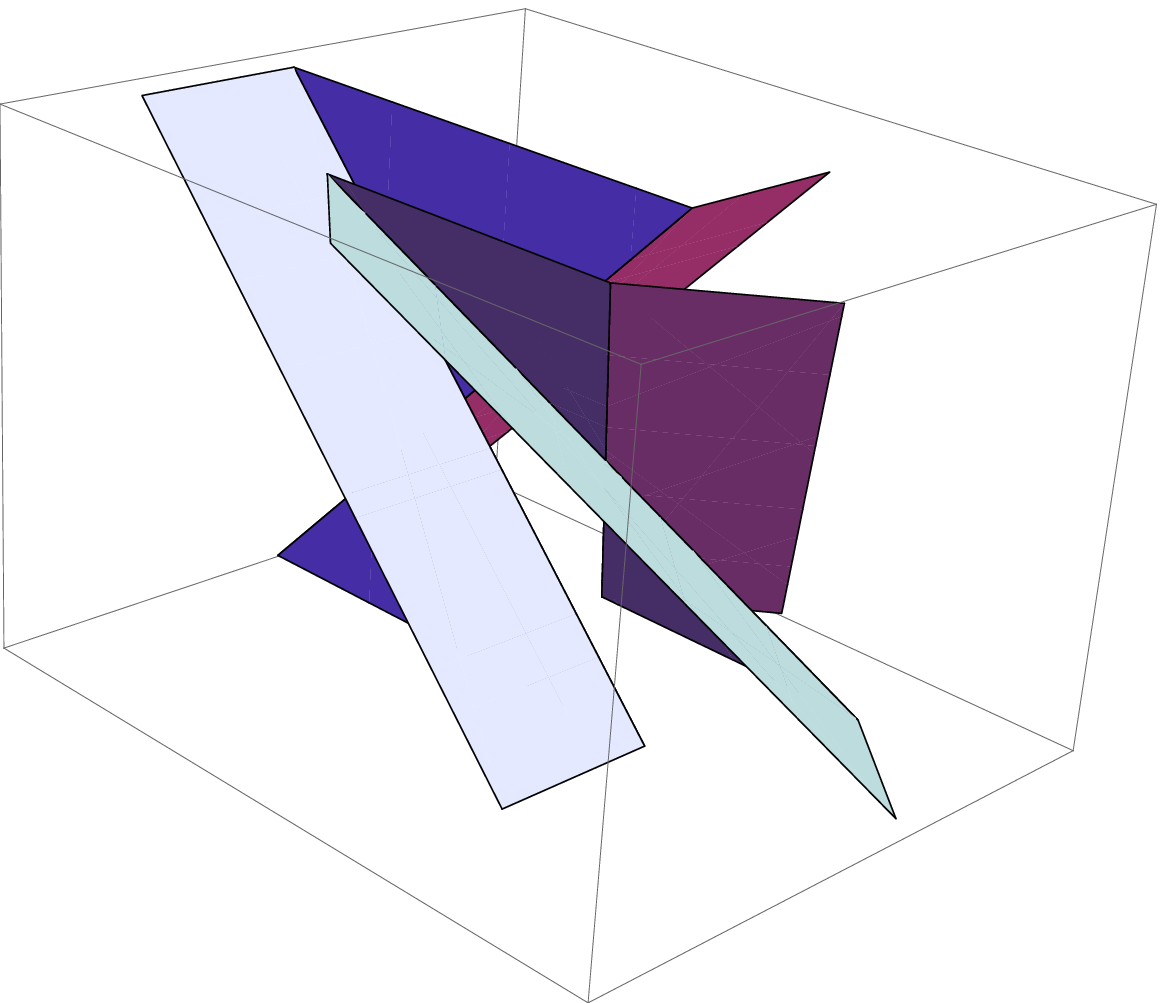}
  \caption{A pair of disjoint crooked planes.}
  \label{fig:pair}
  \end{figure}

We will now write down the condition for a pair of crooked planes to be disjoint.  First of all, the directors must be {\em non-crossing}, a term we shall explain here.  A pair of spacelike vectors $\vu_1,\vu_2\in\V$ are said to be {\em crossing} (respectively, {\em ultraparallel}, {\em asymptotic}) if $\vu_1^\perp\cap\vu_2^\perp$ is timelike (respectively, spacelike, null).  A non-crossing pair is either ultraparallel or asymptotic.  

We further require some normalizing conditions on the directors.  We say that a pair of non-crossing spacelike vectors $\vu_1,\vu_2\in\V$ are {\em consistently oriented} if\,:
\begin{itemize}
\item $\ldot{\vu_1}{\vu_2}<0$;
\item $\ldot{\vu_i}{\xpm{\vu_j}}\leq 0$ for $i,j=1,2$.
\end{itemize}
Given two spacelike, non-crossing vectors $\vw_1,\vw_2$, there is a unique choice of unit-spacelike vector $\vu_i\in\R\vw_i$ such that $\vu_1,\vu_2$ are consistently oriented.  In other words, there is a unique choice of pair of consistently oriented directions.

Now let $\CP(p_1, \vu_1),\CP(p_2, \vu_2)$ such that $\vu_1,\vu_2$ are ultraparallel.  Since $\CP(p,\vu)=\CP(p,-\vu)$, we may assume without loss of generality that $\vu_1,\vu_2$ are consistently oriented.  In that case, the crooked planes are disjoint if and only if the {\em Drumm-Goldman inequality}~\cite{DG99} holds\,:
\begin{equation}\label{DGinequality}
(p_2-p_1)\cdot\vu_1\times\vu_2>\abs{(p_2-p_1)\cdot\vu_1}+\abs{(p_2-p_1)\cdot\vu_2}.
\end{equation}

\begin{defn}
A {\em crooked foliation} is a path of pairwise disjoint crooked planes $\{ \CP(p_t, \vu_t\}_{t\in\R}$, where $ \vu_t\in\V$ is a continuous curve of spacelike vectors and $p_t\in\E$ a regular curve of points.
\end{defn}
A {\em regular curve} is a parametrized curve that is both smooth and such that its tangent vector is non-zero at all points.  Our requirement could be relaxed\,: we can easily construct foliations by crooked planes along curves that are only piecewise smooth (in fact, we only really need a $C^1$ curve) or that have zero tangent vector at a discrete set of points.  It is useful though, in order to keep the statements of Theorems~\ref{thm:tangentA},~\ref{thm:tangent} and~\ref{thm:tangentB} as ``clean'' as possible.  The curves we will consider will be orbit curves and consequently regular.  The reader will see how most of our statements might be modified to admit piecewise smooth curves.
\begin{ex}\label{basicex}
For $t\in\R$, set\,:
\begin{equation}\label{eq:normal}
\vu_t=\begin{bmatrix} \cosh(t) \\ 0 \\ \sinh(t)\end{bmatrix}.
\end{equation}
These are pairwise ultraparallel unit-spacelike vectors.  Next, set\,:
\begin{equation*}
p_t=\left(0,\alpha t, 0\right)
\end{equation*}
where $\alpha>0$.  Then for every $s>t$, $\vu_s,-\vu_t$ are consistently oriented.  The left-hand side of the Drumm-Goldman inequality yields\,:
\begin{equation*}
(p_s-p_t)\cdot(-\vu_t\times\vu_s)>\alpha(s-t)\sinh(s-t)>0.
\end{equation*}
The right-hand side is zero.  Thus every pair of distinct crooked planes is disjoint and $\{ \CP(p_t, \vu_t\}_{t\in\R}$ is a crooked foliation.  See Figure~\ref{fig:axis}.
\end{ex}
 \begin{figure}
  \includegraphics{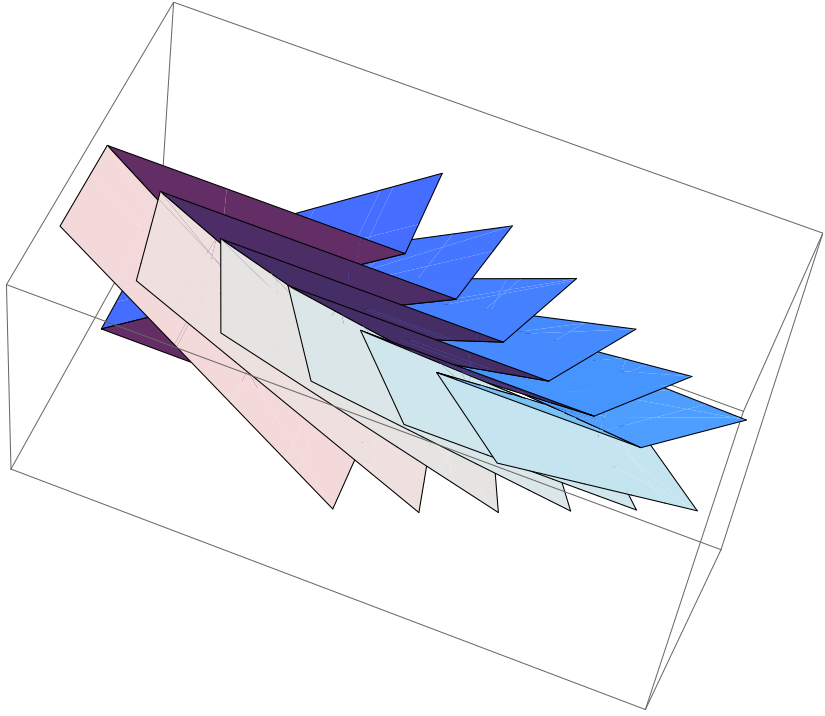}
  \caption{A crooked foliation with vertices along a line.}
  \label{fig:axis}
  \end{figure}

This example arises, in fact, from a one-parameter subgroup of Lorentzian isometries $\langle\gamma_t\rangle$, where $\L(\gamma_t)$ is hyperbolic.  Indeed, $p_t$ is the orbit of a point on the invariant axis of $\langle\gamma_t\rangle$ and $\vu_t$ is the orbit of a unit-spacelike vector, under the action of the linear part.  
%
\begin{defn}
Let $\langle \ag_t\rangle$ be a one-parameter subgroup of $\Ise$ and set $g_t=\L(\gamma_t)$.  If $\{\CP(\ag_t(p),g_t(\dv))\}$ is a crooked foliation, we say that the orbit curve $\ag_t(p)$ admits a {\em one-parameter crooked foliation}.
\end{defn}
Since disjointness of the crooked planes require $g_t(\dv)$ and $g_s(\dv)$ to be non-crossing, $g_t$ must be non-elliptic.

\section{An infinitesimal condition on disjointness}\label{sec:inf}

Starting from a variant of the original Drumm-Goldman inequality~\cite{BCDG, CDG10}, we will deduce an infinitesimal condition for disjointness, suitable for crooked foliations.  Let us underscore that the results of this section hold for all pairs of non-crossing directors of crooked planes, not just ultraparallel directors.

In keeping with the notation introduced in~\cite{BCDG}, set\,:
\begin{equation*}\label{eq:quad}
\Quad(\vu)=\{ a\xm{\vu}-b\xp{\vu}:a,b\geq0\}\setminus\{\origin\}.
\end{equation*}
It is a quadrant in $\vu^\perp\subset\V$.  Note that $\Quad(-\vu)=-\Quad(\vu)$.

\begin{thm}\cite{BCDG} \label{thm:allowable}
Suppose $\vu_1,\vu_2$ are consistently oriented, spacelike vectors.  Then the crooked planes $\CP(\vu_1,p_1)$, $\CP(\vu_2,p_2)$ are disjoint if and only if\,:
\begin{equation*}
p_2-p_1\in\inte{\Quad(\vu_2)-\Quad(\vu_1)}.
\end{equation*}
\end{thm}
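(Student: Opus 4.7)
My plan is to express both sides of the asserted equivalence as the same polyhedral cone in $\V$: the Drumm--Goldman side via its supporting half-spaces, and $\Quad(\vu_2)-\Quad(\vu_1)$ via its generators. Writing $\vv = p_2-p_1$ and $A = \vv\cdot(\vu_1\times\vu_2)$, $B = \vv\cdot\vu_1$, $C = \vv\cdot\vu_2$, inequality~(\ref{DGinequality}) reads $A>\abs{B}+\abs{C}$, which is equivalent to the conjunction of the four linear inequalities $\vv\cdot\vn_{\epsilon_1,\epsilon_2}>0$, where
\[
\vn_{\epsilon_1,\epsilon_2}:=\vu_1\times\vu_2-\epsilon_1\vu_1-\epsilon_2\vu_2,\qquad \epsilon_1,\epsilon_2\in\{\pm1\}.
\]
On the other side, using $-\Quad(\vu_1)=\Quad(-\vu_1)$, the set $\mathcal{C}:=\Quad(\vu_2)-\Quad(\vu_1)$ is a convex polyhedral cone generated by the four rays spanned by $\vu_2^-$, $-\vu_2^+$, $\vu_1^+$, and $-\vu_1^-$. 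I then aim to show that the four vectors $\vn_{\epsilon_1,\epsilon_2}$ generate the dual cone $\mathcal{C}^*$; the duality statement for closed convex cones then yields $\inte{\mathcal{C}}=\{\vv:\vv\cdot\vn_{\epsilon_1,\epsilon_2}>0\text{ for all four sign choices}\}$, which is the desired equivalence.

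To identify the extremal rays of $\mathcal{C}^*$, I would compute the inner products $\vn_{\epsilon_1,\epsilon_2}\cdot\vu_j^\pm$ for each $j\in\{1,2\}$ and each sign pair. Expanding $\vu_2=a\vu_1+b\vu_1^-+c\vu_1^+$ in the null frame of $\vu_1$, the coefficients are $b=(\vu_2\cdot\vu_1^+)/\Lambda$ and $c=(\vu_2\cdot\vu_1^-)/\Lambda$ with $\Lambda:=\vu_1^-\cdot\vu_1^+$. A short Gram-determinant calculation shows $\Lambda=-\det[\vu_1,\vu_1^-,\vu_1^+]<0$, and together with the consistent-orientation signs $\vu_2\cdot\vu_1^\pm\leq 0$ this gives $b,c\geq 0$. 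Using $\vu_1^\pm\cdot(\vu_1\times\vu_2)=\det[\vu_1,\vu_2,\vu_1^\pm]$ and expanding in the same null frame one arrives at
\[
\vn_{\epsilon_1,\epsilon_2}\cdot\vu_1^+ = -b\Lambda(1+\epsilon_2),\qquad \vn_{\epsilon_1,\epsilon_2}\cdot\vu_1^- = c\Lambda(1-\epsilon_2),
\]
which have the required signs $\geq 0$ and $\leq 0$. A parallel computation in the null frame of $\vu_2$ yields analogous formulas for $\vn_{\epsilon_1,\epsilon_2}\cdot\vu_2^\pm$, depending only on $\epsilon_1$. One then reads off that each $\vn_{\epsilon_1,\epsilon_2}$ lies in $\mathcal{C}^*$ and saturates exactly two of its four defining inequalities --- the $\vu_1^\pm$-inequality selected by $\epsilon_2$ and the $\vu_2^\pm$-inequality selected by $\epsilon_1$. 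Each $\vn_{\epsilon_1,\epsilon_2}$ therefore generates an extremal ray of $\mathcal{C}^*$, and the four sign combinations exhaust the extremal rays of this three-dimensional cone.

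The main technical obstacle is the sign bookkeeping across these sixteen inner products. I would streamline it by exploiting the symmetries that swap $\vu_1\leftrightarrow\vu_2$ with $\epsilon_1\leftrightarrow\epsilon_2$, and that reflect $\epsilon_i\mapsto-\epsilon_i$, reducing the work to a single representative computation. A secondary subtlety is the asymptotic case $\vu_1\cdot\vu_2=-1$: there $\vu_1\times\vu_2$ is null, one of the $\vn_{\epsilon_1,\epsilon_2}$ vanishes, and two of the nominal generators of $\mathcal{C}$ become parallel. The cone then becomes simplicial with three generators and a three-ray dual, and the same duality argument still delivers the desired equivalence after accounting for the degeneracy.
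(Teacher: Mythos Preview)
The paper does not supply a proof of this theorem; it is quoted from~\cite{BCDG} and used as a black box. There is therefore nothing here to compare your argument against directly.

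That said, your duality approach is precisely the viewpoint the paper adopts in the unnamed Lemma immediately following the theorem: there the cone $\Quad(\vu_s)+\Quad(\vu_t)$ is described via its four null generators $\vu_s^-,\vu_t^-,-\vu_s^+,-\vu_t^+$, and interior membership is rewritten as the four half-space conditions~\eqref{Drumm-Goldman-ineq-1}--\eqref{Drumm-Goldman-ineq-4}. Your proposal runs this correspondence in the other direction, starting from the Drumm--Goldman inequality~\eqref{DGinequality} (rewritten as $\vv\cdot\vn_{\epsilon_1,\epsilon_2}>0$ for all four sign choices) and identifying that region with $\inte{\Quad(\vu_2)-\Quad(\vu_1)}$. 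This is a legitimate derivation of the theorem from~\cite{DG99} rather than from~\cite{BCDG}.

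Two points to tighten. First, inequality~\eqref{DGinequality} as stated here is only asserted for ultraparallel, unit-spacelike directors (it is not invariant under independent rescaling of $\vu_1,\vu_2$), so your argument inherits those hypotheses; the asymptotic case must appeal separately to the asymptotic disjointness criterion in~\cite{DG99}, not merely to the degeneration you sketch. Second, showing that each $\vn_{\epsilon_1,\epsilon_2}$ lies in $\mathcal{C}^*$ and saturates two of its defining inequalities is not quite enough to conclude it is extremal: you also need that the two saturated generators of $\mathcal{C}$ are \emph{adjacent} in the cyclic order around the cone. This is where consistent orientation enters, fixing the cyclic order of $\vu_1^\pm,\vu_2^\pm$ on the light cone, and it is worth saying so explicitly.
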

Given $\vw_i\in\Quad(\vu_i)$, $i=1,2$, $\vw_2-\vw_1$ belongs to the interior of $\V(\vu_1)-\V(\vu_2)$ if either $\vw_1$ or $\vw_2$, {\em but not both}, belongs to the edge of its respective quadrant.

Now consider a continuous path of spacelike, non-crossing vectors $\vu_t$, $t\in\R$.  Since the map $t\mapsto\ldot{\vu_{0}}{\vu_t}$ is a continuous function, no pair $\vu_s$, $\vu_t$ can be consistently oriented.  That means that either $\vu_s,-\vu_t$ or $-\vu_s,\vu_t$ are consistently oriented, for $s\neq t\in\R$.  We will say that the path is {\em normalized} if for every $s>t\in\R$, $\vu_s$ and $-\vu_t$ are consistently oriented.  (It suffices to check a single pair of values $s,t$.)  For instance, $\vu_t$ described in~\eqref{eq:normal} is a normalized path.

Observe that if a continuous path of spacelike, non-crossing vectors $\vu_t$ is not normalized, then $\vu_{-t}$ is.  Thus any suitable curve of spacelike vectors can be normalized, up to changing the direction of the parametrization.

\begin{lemma}
Let $\dv_t$, $t\in\R$, be a normalized path and let $p_t$, $t\in\R$, be a regular curve in $\E$.  Then $\CP(p_t,\dv_t)$ is a crooked foliation if and only if, for every $s\neq t\in\R$ and for every $r>0$, the following four inequalities hold\,:
\begin{align}
&r(p_s-p_t) \cdot \dv_t^-{\times}\dv^+_s >0 \label{Drumm-Goldman-ineq-1} \\
&r (p_s-p_t)\cdot \dv_t^+{\times}\dv^-_s >0 \label{Drumm-Goldman-ineq-2} \\
&r(p_s-p_t)\cdot \dv_t^-{\times}\dv^-_s>0 \label{Drumm-Goldman-ineq-3} \\
& r(p_s-p_t) \cdot \dv_t^+{\times}\dv^+_s >0. \label{Drumm-Goldman-ineq-4}
\end{align}
In the asymptotic case, we omit~\eqref{Drumm-Goldman-ineq-3} if $\vu_s^-=\vu_t^-$ and ~\eqref{Drumm-Goldman-ineq-4} if $\vu_s^+=\vu_t^+$ .
\end{lemma}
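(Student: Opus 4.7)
Fix $s\neq t$. First, observe that each of the four inequalities is invariant under swapping $s$ and $t$: both $p_s-p_t$ and each cross product $\vu_t^{\pm}\times\vu_s^{\pm}$ change sign under the swap, so their inner products are unchanged. Disjointness of $\CP(p_t,\vu_t)$ and $\CP(p_s,\vu_s)$ is of course symmetric as well, so I may assume $s>t$. By the normalization hypothesis, $\vu_s$ and $-\vu_t$ are then consistently oriented. Since $\CP(p_t,\vu_t)=\CP(p_t,-\vu_t)$, Theorem~\ref{thm:allowable} applied to the pair $(-\vu_t,\vu_s)$ yields that the two crooked planes are disjoint if and only if
\[
p_s-p_t\;\in\;\inte{\Quad(\vu_s)-\Quad(-\vu_t)}\;=\;\inte{\Quad(\vu_s)+\Quad(\vu_t)},
\]
using $\Quad(-\vu)=-\Quad(\vu)$.

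The heart of the proof is to translate this interior condition into the four scalar triple inequalities. The cone $\C:=\Quad(\vu_s)+\Quad(\vu_t)$ is the Minkowski sum of two planar quadrants lying in the distinct planes $\vu_s^{\perp}$ and $\vu_t^{\perp}$, so it is a full-dimensional polyhedral cone in $\V$ generated by the four null vectors $\vu_s^-,\,-\vu_s^+,\,\vu_t^-,\,-\vu_t^+$. In the ultraparallel case these generators are distinct, and standard convex-geometry arguments show that $\C$ has four 2-dimensional facets, each obtained by summing one edge-ray of $\Quad(\vu_s)$ with one edge-ray of $\Quad(\vu_t)$. A normal to such a facet is the cross product of its two generating null rays, and $p_s-p_t$ lies in the interior of $\C$ iff it pairs positively with each inward normal. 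Tabulating the four facet/normal pairs produces exactly the four scalar triple products $(p_s-p_t)\cdot(\vu_t^{\pm}\times\vu_s^{\pm})$ appearing in the statement.

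The main delicate step is verifying that the specific cross products in~\eqref{Drumm-Goldman-ineq-1}--\eqref{Drumm-Goldman-ineq-4} point \emph{into} $\C$ with the correct sign, so that the interior condition is the stated ``$>0$'' rather than ``$<0$''. I would settle this by evaluating each scalar triple product on a reference interior point such as $\vu_s^-+\vu_t^--\vu_s^+-\vu_t^+$, using the null-frame orientation $\det[\vu,\xm{\vu},\xp{\vu}]>0$ from Definition~\ref{def:nullframe} together with the consistent-orientation inequalities $\vu_s\cdot\vu_t^{\pm}\le 0$ and $\vu_t\cdot\vu_s^{\pm}\ge 0$ that result from applying the definition to the pair $(-\vu_t,\vu_s)$. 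The asymptotic case then follows immediately: if $\vu_s^+=\vu_t^+$ (resp.\ $\vu_s^-=\vu_t^-$) two of the four generators coalesce, one facet degenerates to an edge, and the cross product $\vu_t^+\times\vu_s^+$ (resp.\ $\vu_t^-\times\vu_s^-$) vanishes identically, which is precisely the omission in the statement. Finally, the factor $r>0$ in front of each inequality is redundant --- each inequality is invariant under positive rescaling --- and is inserted only to streamline the rescaling argument in the subsequent infinitesimal result.
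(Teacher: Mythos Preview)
Your proposal is correct and follows essentially the same route as the paper: reduce to $s>t$ via symmetry, apply Theorem~\ref{thm:allowable} to the consistently oriented pair $(-\vu_t,\vu_s)$ to obtain the interior condition $p_s-p_t\in\inte{\Quad(\vu_s)+\Quad(\vu_t)}$, and then recognize this cone as the intersection of the four half-spaces defined by the cross products. You supply considerably more detail than the paper does on the convex geometry (facet structure, inward normals, sign verification, the asymptotic degeneration, and the redundancy of the factor $r$), which is all to the good. One minor imprecision: swapping $s$ and $t$ does not preserve each of \eqref{Drumm-Goldman-ineq-1} and \eqref{Drumm-Goldman-ineq-2} individually but rather interchanges them, while \eqref{Drumm-Goldman-ineq-3} and \eqref{Drumm-Goldman-ineq-4} are each fixed; the \emph{set} of four inequalities is invariant, which is all you need.
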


\begin{proof}
Permuting the indices if necessary, we may assume that $s>t$. The vectors $-\dv_t,~\dv_s$ are then consistently oriented.  By Theorem~\ref{thm:allowable}, $\CP(p_t,\dv_t)\cap\CP(p_s,\dv_s)=\emptyset$ if and only if $p_s-p_t\in\inte{\Quad(\vu_s)+\Quad(\vu_t)}$.  Now $\Quad(\vu_s)+\Quad(\vu_t)$ is a cone, spanned by positive multiples of $\dv_s^-$, $\dv_t^-$, $-\dv_s^+$, $-\dv_t^+$.  Therefore $p_s-p_t\in\inte{\Quad(\vu_s)+\Quad(\vu_t)}$ if and only if $p_s-p_t$ lies in the intersection of the halfspaces given by~\eqref{Drumm-Goldman-ineq-1}-\eqref{Drumm-Goldman-ineq-4}.
\end{proof}
\begin{rem}
The cone $\Quad(\vu_s)+\Quad(\vu_t)$ is four-sided if $\vu_s,\vu_t$ are ultraparallel and three-sided if they are asymptotic.
\end{rem}
An infinitesimal condition for disjointness was first proposed in~\cite{BCDG}.  We prove here a slight improvement of the condition, allowing the derivative of the curve of vertices $p_t$ to belong to the edge of $\Quad(\dv_t)$.
\begin{thm}\label{thm:tangentA}
Let $\dv_t$, $t\in\R$, be a normalized path of pairwise ultraparallel spacelike vectors.  Suppose $p_t$, $t\in\R$, is a regular curve such that, for every $t\in\R$\,:
\begin{equation*}
\dot{p}_t\in\Quad(\dv_t).
\end{equation*}
Then $\CP(p_t,\dv_t)$ is a crooked foliation.
\end{thm}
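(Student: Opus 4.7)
The plan is to invoke the preceding Lemma, which reduces pairwise disjointness to the four triple-product inequalities~\eqref{Drumm-Goldman-ineq-1}--\eqref{Drumm-Goldman-ineq-4}, and to establish them simultaneously by integration. By Theorem~\ref{thm:allowable}, the four inequalities are jointly equivalent, for each fixed $s>t$, to the membership
\begin{equation*}
p_s - p_t \in \inte{\Quad(\dv_s)+\Quad(\dv_t)}.
\end{equation*}
Since $p_t$ is regular, the left side equals $\int_t^s\dot{p}_r\,dr$, and the hypothesis $\dot{p}_r\in\Quad(\dv_r)$ turns the question into one about how $\Quad(\dv_r)$ sits inside $\Quad(\dv_s)+\Quad(\dv_t)$ as $r$ ranges over $(t,s)$.

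The key geometric claim I would prove is that, for every $t<r<s$,
\begin{equation*}
\Quad(\dv_r)\setminus\{\origin\} \subset \inte{\Quad(\dv_s)+\Quad(\dv_t)}.
\end{equation*}
It suffices to show that the two extreme rays $\dv_r^-$ and $-\dv_r^+$ generating $\Quad(\dv_r)$ each lie in the interior of the sum cone; convexity of the interior then propagates interiority to every non-zero non-negative combination. This interiority is really a statement about the cyclic order of the six future-pointing null rays $\dv_t^\pm,\dv_r^\pm,\dv_s^\pm$ on the celestial circle: normalization together with pairwise ultraparallelism of $\dv_t,\dv_r,\dv_s$ forces $\dv_r^-$ to lie strictly between $\dv_t^-$ and $\dv_s^-$, and symmetrically $\dv_r^+$ between $\dv_s^+$ and $\dv_t^+$. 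In practice I would verify the four strict sign conditions on the triple products $\det(\dv_t^\epsilon,\dv_s^{\epsilon'},\dv_r^\pm)$ by combining the consistent-orientation inequalities for the three pairs $(\dv_t,\dv_r)$, $(\dv_r,\dv_s)$, $(\dv_t,\dv_s)$ with the observation that strict ultraparallelism upgrades the weak inequalities $\dv_i\cdot\dv_j^\pm\le 0$ to strict ones.

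With the claim in hand the conclusion is formal: $\dot{p}_r\in\inte{\Quad(\dv_s)+\Quad(\dv_t)}$ for every $r\in(t,s)$, and at the endpoints $r=t,s$ the vector $\dot{p}_r$ still belongs to the closed cone $\overline{\Quad(\dv_s)+\Quad(\dv_t)}$; a standard splitting argument (integrate a continuous map into a closed convex cone that meets its interior on an interval of positive measure) then yields $\int_t^s\dot{p}_r\,dr\in\inte{\Quad(\dv_s)+\Quad(\dv_t)}$.

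The hard part is the geometric claim. The delicate point is that $\dv_r^\pm$ lies on the \emph{boundary} of its own quadrant $\Quad(\dv_r)$, yet must land in the \emph{interior} of the sum of two distinct quadrants; this is exactly where the cyclic-ordering combinatorics of null rays along an ultraparallel path does the essential work, and it is the source of the ``sharpening'' of~\cite{BCDG} advertised in the introduction, which allows $\dot{p}_r$ to sit on the edge of $\Quad(\dv_r)$.
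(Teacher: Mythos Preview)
Your proposal is correct and follows essentially the same route as the paper's proof: write $p_s-p_t=\int_t^s\dot p_\tau\,d\tau$, observe that for $\tau\in(t,s)$ the edges $\dv_\tau^-,-\dv_\tau^+$ of $\Quad(\dv_\tau)$ lie in the cone generated by $\dv_t^-,\dv_s^-,-\dv_t^+,-\dv_s^+$, and conclude $p_s-p_t\in\inte{\Quad(\dv_s)+\Quad(\dv_t)}$. Your version is in fact more careful than the paper's on the interiority step---the paper simply asserts the integral lands in the interior, whereas you isolate the reason (the celestial-circle ordering forces $\dv_r^\pm$ strictly between $\dv_t^\pm$ and $\dv_s^\pm$, so $\Quad(\dv_r)\setminus\{0\}$ already sits in the open cone) and handle the endpoints with a clean closed-cone-plus-interior argument.
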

\begin{proof}
Choose $t_0\in\R$ and write\,:
\begin{equation*}
p_t=p_{t_0}+\int_{t_0}^t \dot{p}_\tau d\tau.
\end{equation*}
Let $s>t\in\R$.  For every $\tau\in(t,s)$, $\Quad(\dv_\tau)$ is the convex hull of two rays, spanned by $\xm{\dv_\tau},-\xp{\dv_\tau}$, which can themselves be expressed as positive linear combinations of $\xm{\dv_t}$, $\xm{\dv_s}$, $-\xp{\dv_t}$, $-\xp{\dv_s}$.  Thus\,:
\begin{equation*}
p_s-p_t=\int_t^s \dot{p}_\tau d\tau\in\inte{\Quad(\vu_s)+\Quad(\vu_t)}.
\end{equation*}
By Theorem~\ref{thm:allowable}, the crooked planes $\CP(p_s,\dv_s)$, $\CP(p_t,\dv_t)$ are disjoint.
\end{proof}
In Example~\ref{alpha=kl}, we will illustrate the case where each $\dot{p}_t$ belongs to the edge of $\Quad(\dv_t)$; there we will compute the Drumm-Goldman inequality explicitly to show that we obtain a crooked foliation.

We now prove the converse to Theorem~\ref{thm:tangentA}.
\begin{thm}\label{thm:tangent}
Let $\dv_t$, $t\in\R$, be a continuous path of pairwise ultraparallel spacelike vectors.  Let $p_t$, $t\in\R$, be a regular curve admitting the crooked foliation $\CP(p_t,\dv_t)$.  Then for every $t\in\R$\,:
\begin{equation*}
\dot{p}_t\in\Quad(\dv_t).
\end{equation*}
\end{thm}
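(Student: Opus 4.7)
The plan is to invert the reasoning of Theorem~\ref{thm:tangentA}, extracting the pointwise condition from the pairwise disjointness. First I would reduce to the case where $\dv_t$ is a normalized path, by reversing the parametrization if necessary. Fix $t_0 \in \R$. For every $s > t_0$, Theorem~\ref{thm:allowable} together with $\Quad(-\dv) = -\Quad(\dv)$ translates the disjointness of $\CP(p_s, \dv_s)$ and $\CP(p_{t_0}, \dv_{t_0})$ into
\begin{equation*}
p_s - p_{t_0} \in \inte{\Quad(\dv_s) + \Quad(\dv_{t_0})}.
\end{equation*}
Setting $q_s = (p_s - p_{t_0})/(s - t_0)$, positive homogeneity of cones gives $q_s \in \Quad(\dv_s) + \Quad(\dv_{t_0})$, while regularity of $p$ gives $q_s \to \dot{p}_{t_0}$ as $s \to t_0^+$.

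I would then decompose $q_s = v_s + w_s$ with $v_s \in \cl{\Quad(\dv_s)}$ and $w_s \in \cl{\Quad(\dv_{t_0})}$, and pass to the limit. Since the null frame $\xpm{\dv}$ depends continuously on the spacelike vector $\dv$, once boundedness of $\{v_s\}$ (and hence of $\{w_s\}$) is established, extracting convergent subsequences yields $v_s \to v \in \cl{\Quad(\dv_{t_0})}$ and $w_s \to w \in \cl{\Quad(\dv_{t_0})}$. The closed stem quadrant is a convex cone, so $\dot{p}_{t_0} = v + w$ lies in $\cl{\Quad(\dv_{t_0})}$; regularity of $p$ excludes the origin, giving $\dot{p}_{t_0} \in \Quad(\dv_{t_0})$.

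The main obstacle is precisely this boundedness of the decomposition: it is non-unique because the cone $\Quad(\dv_s) + \Quad(\dv_{t_0})$ is three-dimensional for $s \neq t_0$ but flattens to the two-dimensional $\Quad(\dv_{t_0})$ in the limit, so in principle the coefficients could blow up. My plan is to exploit the pointedness of $\cl{\Quad(\dv_{t_0})}$, namely $\cl{\Quad(\dv_{t_0})} \cap -\cl{\Quad(\dv_{t_0})} = \{\origin\}$. If $\|v_s\| \to \infty$ along a subsequence, passing to a further subsequence gives $v_s/\|v_s\| \to \hat{v}$ for some unit vector $\hat{v}$, which lies in $\cl{\Quad(\dv_{t_0})}$ by continuity of the null frame. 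Boundedness of $q_s$ then forces $w_s/\|v_s\| \to -\hat{v}$, and since $w_s/\|v_s\| \in \cl{\Quad(\dv_{t_0})}$ for every $s$, one finds $-\hat{v} \in \cl{\Quad(\dv_{t_0})}$. Pointedness yields $\hat{v} = \origin$, contradicting $\|\hat{v}\| = 1$, so $\{v_s\}$ is bounded; an analogous argument bounds $\{w_s\}$ and completes the proof.
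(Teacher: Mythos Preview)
Your argument is correct, and it reaches the conclusion by a genuinely different route from the paper. The paper works directly with the four half-space inequalities~\eqref{Drumm-Goldman-ineq-1}--\eqref{Drumm-Goldman-ineq-4} of the preceding Lemma: dividing by $s-t$ and letting $s\to t$ in~\eqref{Drumm-Goldman-ineq-1} and~\eqref{Drumm-Goldman-ineq-2} immediately gives $\dot{p}_t\cdot\dv_t=0$, but the limits of~\eqref{Drumm-Goldman-ineq-3} and~\eqref{Drumm-Goldman-ineq-4} are degenerate (the cross products $\xm{\dv_t}\times\xm{\dv_s}$ and $\xp{\dv_t}\times\xp{\dv_s}$ vanish), so the paper conjugates into an explicit model, rescales by a factor tending to infinity, and computes the limiting direction by hand to obtain $\dot{p}_t\cdot\xm{\dv_t}\geq 0$ and $\dot{p}_t\cdot\xp{\dv_t}\leq 0$. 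Your approach replaces that coordinate computation with a soft convex-analysis argument: decompose the difference quotient as a sum in $\cl{\Quad(\dv_s)}+\cl{\Quad(\dv_{t_0})}$, then use pointedness of $\cl{\Quad(\dv_{t_0})}$ to rule out blow-up of the summands and pass to the limit. The paper's method is shorter and more elementary once one accepts the explicit calculation; yours is coordinate-free and would transfer unchanged to any setting where the stem quadrants are pointed cones varying continuously with the director, without needing to identify the limiting direction of the rescaled cross product.
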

\begin{proof}
Substituting $t$ for $-t$ if necessary, we may suppose without loss of generality that $\dv_t$ is a normalized path.

For $s>t\in\R$, set\,:
\begin{equation*}
 v_{t,s} = \frac{1}{s-t}(p_s -p_t)
 \end{equation*}
 Then\,:
 \begin{equation*}
 \lim_{s\rightarrow t} v_{t,s}=\dot{p}_t
 \end{equation*}
Since $\vu_t$ is a positive scalar multiple of $\xm{\vu_t}\times\xp{\vu_t}$, taking the limit when $s\rightarrow t$ in~\eqref{Drumm-Goldman-ineq-1} and~\eqref{Drumm-Goldman-ineq-2} yields\,:
\begin{align*}
&\dot{p}_t \cdot \vu_t\geq 0  \\
&\dot{p}_t \cdot \vu_t\leq 0.
\end{align*}
Thus $\dot{p}_t\in\vu_t^\perp$.

Next, we will use~\eqref{Drumm-Goldman-ineq-3} to show that $\dot{p}_t\cdot\xm{\vu_t}>0$.  Conjugating if necessary, we may assume that\,:
\begin{align*}
\xm{\vu_t} & = \begin{bmatrix} 0\\ 1\\ 1\end{bmatrix} \\
\xm{\vu_s} & = \begin{bmatrix} \sech \theta(s)\\ \tanh\theta(s)\\ 1\end{bmatrix}
\end{align*}
where $\lim_{s\rightarrow t}\theta(s)=\infty$.  We may multiply $\xm{\vu_s}$ by $\cosh\theta(s)$ since it is a positive number, yielding\,:
\begin{equation*}
\cosh\theta(s)\xm{\vu_t}\times\xm{\vu_s}=\begin{bmatrix} e^{-\theta(s)} \\ 1\\ 1\end{bmatrix}.
\end{equation*}
Taking the limit of~\eqref{Drumm-Goldman-ineq-3} when $s$ goes to $t$, we obtain $\dot{p}_t\cdot\xm{\vu_t}\geq 0$.

In the same manner,~\eqref{Drumm-Goldman-ineq-4} yields $\dot{p}_t\cdot\xp{\vu_t}\leq0$.
\end{proof}

\section{One-parameter hyperbolic groups}\label{sec:hyp}

Let $g\in\SOto$ be hyperbolic.  Then its fixed eigendirection is spacelike, which we can associate to a null frame as in Definition~\ref{def:nullframe}.  Specifically, let $e^{-l}<1<e^l$ be the three eigenvalues of $g$ ($l>0$).  Let $\xpm{g}$ be the $e^{\pm l}$-eigenvector whose third coordinate is 1.  Set $\xo{g}$ to be the unique unit-spacelike 1-eigenvector such that $(\xo{g},\xm{g},\xp{g})$ is a null frame.

 Let $\langle g_t\rangle$ be a one-parameter subgroup of $\SOto$ of hyperbolic isometries, with $g_1=g$.    For every $t\in\R$\,:
\begin{align*}
\xo{g_t} & =\xo{g}\\
\xp{g_t} &= \xp{g}\\
\xm{g_t} &=\xm{g}.
\end{align*}
 Conjugating if necessary, we may suppose without loss of generality that\,:
\begin{equation}\label{normalized-hyperbolic}
g_t =
\begin{bmatrix}
 \cosh(lt) & 0 & \sinh(lt)  \\
 0& 1  & 0 \\
 \sinh(lt) & 0 &  \cosh(lt)

\end{bmatrix}
\end{equation}
where $l>0$.  Thus\,:
\begin{align*}
 \xo{g_t} & = \begin{bmatrix} 0\\ 1\\ 0\end{bmatrix} \\
 \xp{g_t} & =\begin{bmatrix} 1\\ 0\\ 1\end{bmatrix} \\
 \xm{g_t} & =\begin{bmatrix} -1\\ 0\\ 1\end{bmatrix} .
 \end{align*}
We will determine a suitable path of directors $\vu_t$ for a one-parameter crooked foliation $\{\CP(p_t,\vu_t)\}$. In order to simplify calculations, we will choose an orbit of unit-spacelike vectors in $\left(\xo{g}\right)^\perp$.  The reader can easily check that $\CP(p_t,-\vu_t)=\CP(p_t,\vu_t)$, thus one may choose either of the two unit-spacelike curves in $\left(\xo{g}\right)^\perp$.  Furthermore, we may choose the orientation such that the curve is normalized.  Therefore, set\,:
\begin{equation}\label{utcurve}
\vu_t=g_t(\vu_0)=\begin{bmatrix} \cosh(lt) \\ 0 \\ \sinh(lt)\end{bmatrix}.
\end{equation}

Let us now describe the $\langle g_t\rangle$-orbits.  First, observe that each orbit lies in a plane parallel to $\left(\xo{g}\right)^\perp$.  Set $\WP^\pm=\langle \xo{g},\xpm{g}\rangle$.  The union $\WP^+\cup\WP^-$ divides $\V$ into four sectors.  Two of these sectors contain those vectors in $(\xo{g})^\perp$ which are spacelike, and the two others, those which are timelike.  Given $\vu\in(\xo{g})^\perp$, its $g_t$-orbit is one of three types, depending on the sector to which $\vu$ belongs\,:
\begin{itemize}
\item the curve $g_t(\vu)$ is {\em spacelike} when $\vu$ is a timelike vector;
\item the curve $g_t(\vu)$ is {\em timelike} when $\vu$ is a spacelike vector;
\item the curve $g_t(\vu)$ is {\em lightlike}  when $\vu$ is a lightlike vector.
\end{itemize}

\subsection{Adding a translational part}

Let $\gamma\in\Ise$ with linear part $g$ and acting without fixed point. Let $\tp$ be the translational part of $\gamma$.  Conjugating with a translation if necessary, we may assume that $\tp=\alpha g^0$, where $\alpha\in\R$.  The value $\alpha$ is called the {\em Margulis invariant} of $\gamma$~\cite{Ma87}.

Thus $\gamma$ admits a unique invariant line $\lg$ and the restriction of $\gamma$ acts by translation\,:
\begin{equation*}
\gamma\lvert_{\lg}:x\mapsto x+\alpha \xo{g}.
\end{equation*}

Let $\gamma_t$ be the one-parameter subgroup of $\Ise$ such that $\gamma_1=\gamma$.  In particular, $\L(\gamma_t)=g_t$\,:
\begin{equation*}
\gamma_t =
\begin{bmatrix}
 \cosh(lt) & 0 & \sinh(lt) & 0 \\
0 & 1 & 0 &  \alpha t \\
 \sinh(lt) & 0 & \cosh(lt) & 0
\end{bmatrix}
\end{equation*}
where $l>0$.  The Margulis invariant of $\gamma_t$ is $\alpha\abs{t}$.

In order to obtain crooked foliations, we need disjoint crooked planes which, in turn, requires $\alpha>0$~\cite{DG99}.  We saw this in Example~\ref{basicex}.  From now on, we will assume that $\alpha>0$.  (Negative values of $\alpha$ require the use of ``negatively extended'' crooked planes; see~\cite{DG99}.)

A $\langle\gamma_t\rangle$-orbit looks like a $\langle g_t\rangle$-orbit that has been ``stretched'' in the $\xo{g}$ direction.  Let us make this statement more precise.  The planes $\WP^\pm$ admit affine counterparts, $\lg+\WP^\pm$, which divide $\E$ into four sectors.  For every $p\in\E$, there exist $q\in\lg$ and $\vx\in(\xo{g})^\perp$ such that\,:
\begin{equation}\label{eq:SandT}
p=q+\vx.
\end{equation}
Clearly, $p\in\lg+\WP^\pm$ if and only if $\vx\in\WP^\pm$, and the case $\vx=\origin$ corresponds to $p\in\lg$.  Keeping the notation in Equation~\eqref{eq:SandT}, set\,:
\begin{equation*}
\TS =\{ p\in\E : \vx\cdot\vx<0\} .
\end{equation*}
Each orbit in $\TS$, like its linear counterpart, is a spacelike curve.  Orbits in $\lg+\WP^\pm$ are now spacelike, because $\alpha\neq 0$.  To describe the remaining orbits, still keeping the notation in Equation~\eqref{eq:SandT}, for $k>0$, set\,:
\begin{equation*}
\SS_k =\{p\in\E : \vx\cdot\vx=k^2\}.
\end{equation*}
This is a hyperbolic cylinder which is, furthermore,  $\langle\gamma_t\rangle$-invariant.  As we will see later, while some of the orbits in $\SS_k$ remain timelike, the stretch factor introduced by $\alpha$ means that some of the orbits will be spacelike, depending on the value of $k$.  Finally, set\,:
\begin{equation*}
\SS=\bigcup_{k>0}\SS_k.
\end{equation*}
Alternatively\:
\begin{equation*}
\SS=\inte{\lg+\Quad(\xo{g})}\cup\inte{\lg+\Quad(-\xo{g})}.
\end{equation*}

\subsection{Orbits in $\TS\cup\WP^\pm$}\label{ssec:axis}

Example~\ref{basicex} shows that $\lg$ admits a one-parameter crooked foliation.  We will prove that this is the only orbit in $\TS\cup\WP^\pm$ to admit one.

Recall the expression for $\vu_t$ given in~\eqref{utcurve}.

First, consider an orbit in $\TS$.  Since we may place the origin anywhere along the invariant axis $\lg$, we may suppose without loss of generality that\,:
\begin{equation}\label{Torbit}
p_t=\gamma_t(p_0)=\left( k\sinh(l(t+t_0)), \alpha t, k\cosh(l(t+t_0))\right)
\end{equation}
where $k\neq 0$ and $t_0\in\R$.
\begin{lemma}
Let $p\in\TS$.  Then the orbit curve through $p$ does not admit a one-parameter crooked foliation.
\end{lemma}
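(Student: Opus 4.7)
The plan is to derive a contradiction from the tangent criterion of Theorem~\ref{thm:tangent}. Suppose, toward contradiction, that the orbit through $p$ admits a one-parameter crooked foliation $\{\CP(p_t, g_t(\vu_0))\}$ for some unit-spacelike $\vu_0 \in (\xo{g})^\perp$. Using the sign-invariance $\CP(p, -\vu) = \CP(p, \vu)$, I would reduce to the case $\vu_0 = (\cosh c, 0, \sinh c)$ for some $c \in \R$, so that
\[
\vu_t = g_t(\vu_0) = (\cosh(lt+c),\, 0,\, \sinh(lt+c)).
\]
A short inner-product computation confirms that $-\vu_t,\vu_s$ are consistently oriented for $s > t$, so the director path is normalized and Theorem~\ref{thm:tangent} forces $\dot{p}_t \in \Quad(\vu_t)$ for every $t \in \R$.

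The next step is to describe $\Quad(\vu_t)$ explicitly enough to extract a structural obstruction. Solving the null-frame conditions of Definition~\ref{def:nullframe} in coordinates yields
\[
\xm{\vu_t} = (\tanh(lt+c),\, \sech(lt+c),\, 1), \qquad \xp{\vu_t} = (\tanh(lt+c),\, -\sech(lt+c),\, 1),
\]
after a determinant-sign check. Writing a generic element of $\Quad(\vu_t)$ as $a\xm{\vu_t} - b\xp{\vu_t}$ with $a, b \geq 0$ not both zero and setting $u = a - b$, $v = a + b$, I would observe the key feature: every $(y_1, y_2, y_3) \in \Quad(\vu_t)$ satisfies $y_1 = y_3\tanh(lt+c)$, so in particular $y_3 = 0$ forces $y_1 = 0$.

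The contradiction is then produced by a single-point evaluation. Differentiating~\eqref{Torbit} gives
\[
\dot{p}_t = (kl\cosh(l(t+t_0)),\, \alpha,\, kl\sinh(l(t+t_0))),
\]
so at $t = -t_0$ one obtains $\dot{p}_{-t_0} = (kl,\, \alpha,\, 0)$, whose third coordinate vanishes while its first coordinate equals $kl \neq 0$ because $k \neq 0$. This violates the structural constraint on $\Quad(\vu_{-t_0})$, contradicting Theorem~\ref{thm:tangent} and completing the argument.

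The hard part is purely notational bookkeeping: pinning down the sign conventions in the null frame and verifying that the director path is normalized in the sense required by Theorem~\ref{thm:tangent}. Once those conventions are settled, the entire proof collapses to the evaluation at $t = -t_0$, the instant when the orbit's projection to $(\xo{g})^\perp$ lies along the $\xp{g}+\xm{g}$ axis.
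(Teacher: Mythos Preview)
Your argument is correct and rests on the same mechanism as the paper's: invoke Theorem~\ref{thm:tangent} and show that $\dot p_t\notin\vu_t^{\perp}\supset\Quad(\vu_t)$; indeed, your structural constraint $y_1=y_3\tanh(lt+c)$ is nothing other than the equation $\dot p_t\cdot\vu_t=0$. The paper reaches the contradiction in one line by computing $\dot p_t\cdot\vu_t=kl\cosh(lt_0)\neq 0$ directly (having absorbed your parameter $c$ into $t_0$), whereas you take the longer route through explicit null-frame coordinates and a single-point evaluation at $t=-t_0$.
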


\begin{proof}
By Theorem~\ref{thm:tangent}, $\dot{p}_t$ must belong to $(\vu_t)^\perp$.  However, by Equation~\eqref{Torbit}\,:
\begin{equation*}
\dot{p}_t=\left( kl\cosh l(t+t_0), \alpha , kl\sinh l(t+t_0)\right).
\end{equation*}
Therefore\,:
\begin{equation*}
\dot{p}_t\cdot\vu_t=kl\cosh(t_0)\neq 0.
\end{equation*}

\end{proof}

\begin{lemma}
Let $p\in\WP^\pm\setminus\lg$.  Then the orbit curve through $p$ does not admit a one-parameter crooked foliation.
\end{lemma}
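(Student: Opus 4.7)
The plan is to mirror the proof of the preceding lemma and invoke Theorem~\ref{thm:tangent}: if $\{\CP(p_t,\vu_t)\}$ with $\vu_t$ as in~\eqref{utcurve} were a crooked foliation, then necessarily $\dot p_t \in \Quad(\vu_t) \subset \vu_t^\perp$ for every $t \in \R$. It therefore suffices to verify that $\dot p_t \cdot \vu_t \ne 0$, which already precludes $\dot p_t$ from lying in $\vu_t^\perp$.

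To carry this out, I would first parametrize an orbit in $\WP^\pm \setminus \lg$ explicitly. Since $\WP^\pm$ are $\langle \gamma_t \rangle$-invariant affine planes containing $\lg$, placing the origin on $\lg$ lets me write $p = s_0 \xo{g} + b \xp{g}$ with $b \ne 0$ in the $\WP^+$ case (and analogously $p = s_0 \xo{g} + c \xm{g}$ with $c \ne 0$ in the $\WP^-$ case). Applying the normal form~\eqref{normalized-hyperbolic} and differentiating yields $\dot p_t = \alpha \xo{g} + bl e^{lt}\xp{g}$ (respectively $\dot p_t = \alpha \xo{g} - cl e^{-lt}\xm{g}$).

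The key structural observation that makes the pairing tractable is that $\xo{g}$ is Lorentz-orthogonal to $\vu_t$ for every $t$: this is built into~\eqref{utcurve} at $t=0$ and preserved because $g_t$ fixes $\xo{g}$ and is a Lorentz isometry. Consequently the translational contribution $\alpha \xo{g}$ drops out of $\dot p_t \cdot \vu_t$, and evaluating $\xp{g} \cdot \vu_t = e^{-lt}$ (respectively $\xm{g} \cdot \vu_t = -e^{lt}$) reduces the pairing to the nonzero constant $bl$ (respectively $cl$). This yields the desired contradiction with $\dot p_t \in \vu_t^\perp$. The argument presents no real obstacle; the main care lies in choosing coordinates compatible with the normal form, and the conceptual point is that the obstruction is purely linear in nature and independent of the Margulis invariant $\alpha$.
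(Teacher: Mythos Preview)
Your proposal is correct and follows essentially the same approach as the paper: both proofs invoke Theorem~\ref{thm:tangent} and show directly that $\dot p_t\cdot\vu_t\neq 0$. The only cosmetic difference is that you work in the eigenbasis $(\xo{g},\xp{g},\xm{g})$ and observe conceptually that the $\alpha\xo{g}$ term drops out, whereas the paper computes in standard coordinates to obtain $\dot p_t\cdot\vu_t = kle^{\pm lt}(\pm\cosh(lt)-\sinh(lt))$, which simplifies to the same nonzero constant $\pm kl$.
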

\begin{proof}
The proof of this lemma is similar to the previous one.  Here\,:
\begin{equation}\label{W-orbit}
p_t=\gamma_t(p_0)=\left( ke^{\pm lt}, \alpha t, \pm ke^{\pm lt}\right)
\end{equation}
where $k\neq 0$.  Therefore\,:
\begin{equation*}
\dot{p}_t\cdot\vu_t= kle^{\pm lt}(\pm\cosh(lt)-\sinh(lt))\neq 0.
\end{equation*}
\end{proof}

\subsection{Orbits in $\SS$}

Again, recall the expression for $\vu_t$ in~\eqref{utcurve}.  As above, we may place the origin anywhere on the invariant line $\lg$, so that we may write an arbitrary orbit as follows\,:
\begin{equation}\label{eq:S-orbit}
p_t=\left(k\cosh(l(t+t_0)) ,\alpha t, k\sinh(l(t+t_0))\right)
\end{equation}
where $k\neq 0$ and $t_0\in\R$.  In other words, $p_t\in\SS_{\abs{k}}$.  For now, we do not assume that $k>0$, to avoid a lot of unnecessary signs.

The first condition for the orbit $p_t$ to admit a one-parameter crooked foliation is that $\dot{p}_t\in\vu_t^\perp$\,:
\begin{align*}
\dot{p}_t & = \begin{bmatrix}kl\sinh(l(t+t_0)) \\ \alpha \\ kl\cosh(l(t+t_0))\end{bmatrix}\\
\dot{p}_t\cdot\vu_t & = kl\sinh lt_0.
\end{align*}
Therefore $\dot{p}_t\in\vu_t^\perp$ if and only if $t_0=0$.  (This is what we will mean by being {\em calibrated} in~\S\ref{sec:when}.)  Next\,:
\begin{equation*}
\xpm{\vu_t}  = \begin{bmatrix} \tanh lt \\ \mp\sech lt \\ 1\end{bmatrix}.
\end{equation*}
Therefore\,:
\begin{align*}
\dot{p}_t\cdot\xm{\vu_t} & = \sech lt\left(\alpha-kl\right) \\
\dot{p}_t\cdot\xp{\vu_t} & = \sech lt\left(-\alpha-kl\right).
\end{align*}
We see here that the ratio $\alpha/l$ plays an important role, motivating the following definition.
\begin{defn}
Let $\gamma\in\Ise$ be hyperbolic with Margulis invariant $\alpha>0$.   Let $e^l$, $l>0$ be the largest eigenvalue of its linear part.  The {\em generalized Margulis invariant} of $\gamma$ is\,:
\begin{equation*}
\mu_\gamma =\frac{\alpha}{l}.
\end{equation*}
\end{defn}
If $\langle\gamma_t\rangle$ is a one-parameter hyperbolic group with positive Margulis invariants, then for every $t\in\R$\,:
\begin{equation*}
\mu_{\gamma_t}=\mu_{\gamma_1}.
\end{equation*}
Thus we may speak of the {\em generalized Margulis invariant of the one-parameter group}.
%
%

By Theorem~\ref{thm:tangentA} and its converse, Theorem~\ref{thm:tangent}, we have proved\,:
\begin{thm}\label{thm:SSorbit}
Let  $\langle\gamma_t\rangle$ be a one-parameter hyperbolic group with generalized Margulis invariant $\mu$.  Let $k>0$.  The  $\langle\gamma_t\rangle$-orbit through $p\in\SS_k$ admits a one-parameter crooked foliation if and only if\,:
\begin{equation*}
k\leq\mu.
\end{equation*}
\qed
\end{thm}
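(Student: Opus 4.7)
The plan is to combine the identities already computed in the paragraphs just before the statement with Theorems~\ref{thm:tangentA} and~\ref{thm:tangent}; essentially all of the algebraic content is already exposed, and the proof amounts to assembling the pieces and invoking the right direction of each theorem.

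For necessity, suppose the orbit through $p\in\SS_k$ admits the one-parameter crooked foliation $\{\CP(p_t,\vu_t)\}$ with $\vu_t$ as in~\eqref{utcurve}. Theorem~\ref{thm:tangent} forces $\dot{p}_t\in\Quad(\vu_t)\subset\vu_t^\perp$ for every $t$. The identity $\dot{p}_t\cdot\vu_t=kl\sinh(lt_0)$ then singles out the calibration $t_0=0$. Under this calibration I would decompose $\dot{p}_t$ in the null basis $\{\xm{\vu_t},\xp{\vu_t}\}$ of $\vu_t^\perp$: writing $\dot{p}_t=a(t)\xm{\vu_t}-b(t)\xp{\vu_t}$ and combining the two computed inner products with $\xm{\vu_t}\cdot\xp{\vu_t}=-2\sech^2(lt)<0$, a short linear solve yields
\[
a(t) = \tfrac12\cosh(lt)(\alpha+kl),\qquad b(t) = \tfrac12\cosh(lt)(\alpha-kl).
\]
Since $k,l,\alpha>0$, the non-negativity $b(t)\geq 0$ is precisely $k\leq\alpha/l=\mu$.

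For sufficiency I would reverse this. If $k\leq\mu$ and the orbit is calibrated ($t_0=0$), then $a(t)>0$ and $b(t)\geq 0$ for every $t$, so $\dot{p}_t\in\Quad(\vu_t)$, and Theorem~\ref{thm:tangentA} delivers the crooked foliation. The boundary case $k=\mu$ needs no separate treatment, since $\Quad(\vu_t)$ includes its edges and Theorem~\ref{thm:tangentA} is stated under the inclusion $\dot{p}_t\in\Quad(\vu_t)$.

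The one conceptual point I would flag is the legitimacy of the calibration $t_0=0$. Each $\langle\gamma_t\rangle$-orbit in $\SS_k$ meets the locus $\{x_3=0\}$ in a single point $(k,c,0)$; placing the origin along $\lg$ at $(0,c,0)$ reduces an arbitrary orbit in $\SS_k$ to the parametrization~\eqref{eq:S-orbit} with $t_0=0$, after which the whole question collapses to the single inequality $k\leq\mu$.
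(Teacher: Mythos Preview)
Your proof is correct and follows essentially the paper's own approach: the paper simply reads the sign conditions $\dot{p}_t\cdot\xm{\vu_t}\geq 0$ and $\dot{p}_t\cdot\xp{\vu_t}\leq 0$ directly from the displayed inner products and invokes Theorems~\ref{thm:tangentA} and~\ref{thm:tangent}, whereas you perform the equivalent (and equally short) step of solving for the coefficients $a(t),b(t)$ in the null frame. One small imprecision in your last paragraph: translating the origin along $\lg$ only shifts the $x_2$-coordinate and does \emph{not} change $t_0$; what legitimately sets $t_0=0$ is the freedom, built into the definition of one-parameter crooked foliation, to choose the initial director $\dv$ (equivalently, to reparametrize the orbit so that the base point is the unique point with $x_3=0$).
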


A crooked foliation along such an orbit is depicted in Figure~\ref{fig:hyp}.

\begin{figure}
  \includegraphics{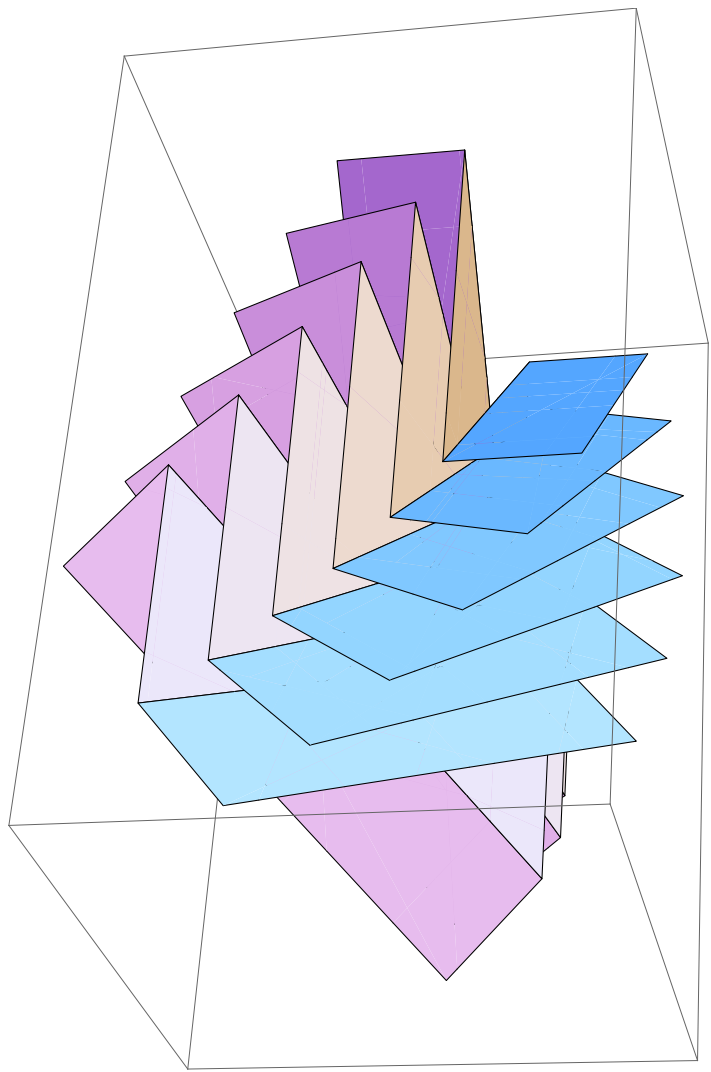}
  \caption{A one-parameter crooked foliation along an orbit in $\SS$.  The group in question is a one-parameter hyperbolic group.}
  \label{fig:hyp}
  \end{figure}

\begin{ex}\label{alpha=kl}
Applying the Drumm-Goldman inequality, we will show directly that when $k=\mu$, the crooked planes $\CP(p_s,\vu_s)$ and $\CP(p_t,\vu_t)$ are disjoint whenever $s\neq t$.

Using Equation~\eqref{eq:S-orbit} (here $k>0$)\,:
\begin{equation*}
p_t=\left(\pm k\cosh lt ,k l t, \pm k\sinh lt \right).
\end{equation*}
Thus the left-hand side of the Drumm-Goldman inequality, assuming $s>t$, is\,:
\begin{equation*}
(p_s-p_t)\cdot \vu_s\times\vu_t  = kl(s-t)\sinh l(s-t) .
\end{equation*}
The right-hand side of the Drumm-Goldman inequality is\,:
\begin{equation*}
\abs{(p_s-p_t)\cdot\vu_s}+\abs{(p_s-p_t)\cdot\vu_t} .
\end{equation*}
Evaluating the first term yields\,:
\begin{align*}
\abs{(p_s-p_t)\cdot\vu_s} & = k\abs{\cosh ls(\cosh ls-\cosh lt)-\sinh ls(\sinh ls-\sinh lt)} \\
& = k\abs{1-\cosh l(s-t)} \\
& = k\left(\cosh l(s-t)-1\right) .
\end{align*}
The calculation for the second term, $\abs{(p_s-p_t)\cdot\vu_t} $, yields the same expression.  Thus the Drumm-Goldman inequality reduces to\,:
\begin{equation*}
l(s-t)\sinh l(s-t)>2\left(\cosh l(s-t)-1\right) .
\end{equation*}
Now consider the Taylor expansions for each side\,:
\begin{align*}
x\sinh x & =x^2+\frac{x^4}{3!}+\frac{x^6}{5!}+\dots \\
2\left(\cosh x-1\right) & = x^2+\frac{1}{2}\frac{x^4}{3!}+\frac{1}{3}\frac{x^6}{5!}+\dots
\end{align*}
Thus all values of $s-t$ satisfy the Drumm-Goldman inequality.
\end{ex}

\section{One-parameter parabolic groups}\label{sec:par}

Consider now the case when $g\in\SOto$ is parabolic.  It admits a 1-dimensional fixed eigenspace, spanned by a null vector which we denote again by $\xo{g}$.  Conjugating if necessary, we may assume without loss of generality that this fixed eigenvector is\,:
\begin{equation*}
\xo{g}=\begin{bmatrix}0 \\ 1 \\ 1\end{bmatrix}.
\end{equation*}
In what follows we will consider the following basis for $\V$\,:
\begin{equation*}
\mathcal{B}=\left( \begin{bmatrix}0 \\ 1 \\ 1\end{bmatrix},\begin{bmatrix}1 \\ 0 \\ 0\end{bmatrix},\begin{bmatrix}0 \\ 2 \\ 0\end{bmatrix}\right).
\end{equation*}
The basis is positively oriented, with Gram matrix\,:
\begin{equation*}
 \left< , \right>_\mathcal{B}
   = \begin{bmatrix} 0 & 0& 2 \\ 0& 1 & 0 \\ 2 & 0 & 4 \end{bmatrix}.
\end{equation*}
The matrix of $g$ with respect to $\mathcal{B}$ is upper triangular.  More precisely, $g=g_{t_0}$, for some $t_0\in\R$, where\,:
\begin{equation*}
\left[g_t\right]_{\mathcal{B}}=
\begin{bmatrix}
1 & t & -t^2  \\
0 & 1 & -2t  \\
0 & 0 & 1
\end{bmatrix}.
\end{equation*}
Without loss of generality, assume that $g=g_1$.


Let $\gamma\in\Ise$ with linear part $g$ and a translation part $\vv$ relative to $\po=(0,0,0)$.  We seek an expression for the translational part of $\gamma^n$, $n\in\Z$, with the
ultimate goal of writing down a one-parameter subgroup $\gamma_t$ with linear part $g_t$.

If $x\in\E$\,:
\begin{equation*}
\gamma_n(x) = \po+g_n(x-\po) + (g_{n-1}+\dots +g+id)(\vv).
\end{equation*}
The matrix of $g_{n-1}+\dots +g+id$ with respect to the basis $\mathcal{B}$ is\,:
\begin{equation*}
\left[g_{n-1}+\dots +g+id\right]_{\mathcal{B}}=
 \begin{bmatrix}
 n & \frac{(n-1)n}{2} & -\frac{(n-1)n(2n-1)}{6} \\
 0 & n & -(n-1)n \\
 0 & 0 & n
 \end{bmatrix} .
\end{equation*}

Now let $\langle \gamma_t\rangle\subset\Ise$, where the linear part of $\gamma_t$ is $g_t$.  Let $x\in\E$ and consider the orbit curve $p_t=\gamma_t(x)$.  Conjugating by a translation if necessary, we may assume that $x= \po$ and therefore\,:
\begin{equation*}
 p_t  =\po+
 \begin{bmatrix}
  t & \frac{(t-1)t}{2} & -\frac{(t-1)t(2t-1)}{6} \\
   0 & t & -(t-1)t \\
   0 & 0 & t
   \end{bmatrix}
         \begin{bmatrix} a \\ b \\ c \end{bmatrix}
\end{equation*}
where $(a,b,c)$ is the translational part for $\gamma_1$, in terms of the basis $\mathcal{B}$.  Thus\,:
\begin{equation*}
\left[\dot{p}_t\right]_\mathcal{B}=\begin{bmatrix} 1 & t -\frac{1}{2} & -t^2+t-\frac{1}{6}\\ 0&1& -2t+1 \\0&0& 1 \end{bmatrix}\begin{bmatrix} a \\ b \\ c \end{bmatrix}.
\end{equation*}
Next, we determine the directors for a possible one-parameter crooked foliation.  This will be a path of spacelike vectors in $(\xo{g})^\perp$ and the following is a normalized path\,:
\begin{equation*}
\left[\vu_t\right]_\mathcal{B}=\begin{bmatrix} t\\ 1\\ 0\end{bmatrix}
\end{equation*}
with associated null frame containing the following null vectors\,:
\begin{align*}
\left[\xm{\vu}_t\right]_\mathcal{B} & = \begin{bmatrix}  1 \\ 0 \\ 0 \end{bmatrix} \\
\left[\xp{\vu}_t\right]_\mathcal{B} & = \begin{bmatrix} 1\\ \frac{2t}{t^2+1} \\  \frac{-1}{t^2+1} \end{bmatrix}.
\end{align*}
%
Recall that the infinitesimal condition for disjointness in Theorem~\ref{thm:tangentA} assumes that the directors are pairwise ultraparallel.  In the asymptotic case, $\xm{\vu_t}$ is constant and translation along the line spanned by this null vector will produce intersecting crooked planes.  However, $\xp{\vu_t}$ and $\xp{\vu_s}$ are linearly independent.  Thus the proof of Theorem~\ref{thm:tangentA} can be modified to yield\,:
\begin{thm}\label{thm:tangentB}
Let $\dv_t$, $t\in\R$, be a normalized path of pairwise asymptotic spacelike vectors with $\xm{\vu_t}$ constant.  Suppose $p_t$, $t\in\R$, is a regular curve such that, for every $t\in\R$\,:
\begin{equation*}
\dot{p}_t\in\Quad(\dv_t)\setminus\R_+\xm{\vu_t}.
\end{equation*}
Then $\CP(p_t,\dv_t)$ is a crooked foliation.
\qed
\end{thm}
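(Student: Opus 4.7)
The plan is to follow the integration argument of Theorem~\ref{thm:tangentA} with modifications for the asymptotic setting. Fix $s>t$; since $\xm{\vu_s}=\xm{\vu_t}$, write $\xm{\vu}$ for this common null vector. The lemma preceding Theorem~\ref{thm:tangentA} then tells us that disjointness of $\CP(p_t,\vu_t)$ and $\CP(p_s,\vu_s)$ is equivalent to the three inequalities~\eqref{Drumm-Goldman-ineq-1},~\eqref{Drumm-Goldman-ineq-2},~\eqref{Drumm-Goldman-ineq-4} holding for $p_s-p_t$. Geometrically, this requires $p_s-p_t$ to lie in the interior of the three-sided cone $\Quad(\vu_s)+\Quad(\vu_t)$, which is generated by the linearly independent null vectors $\xm{\vu}$, $-\xp{\vu_t}$, $-\xp{\vu_s}$.

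As in Theorem~\ref{thm:tangentA}, I would write $p_s-p_t=\int_t^s\dot{p}_\tau\,d\tau$ and use the hypothesis to decompose each tangent vector as $\dot{p}_\tau=a_\tau\xm{\vu}-b_\tau\xp{\vu_\tau}$ with $a_\tau\geq 0$ and $b_\tau>0$ (the strict inequality coming precisely from the excision of $\R_+\xm{\vu}$). The summand $a_\tau\xm{\vu}$ lies on an edge of the cone. For the second summand, the essential geometric claim is that for each $\tau\in(t,s)$ the null vector $-\xp{\vu_\tau}$ admits an expansion
\[
-\xp{\vu_\tau}=\alpha(\tau)\xm{\vu}+\beta(\tau)(-\xp{\vu_t})+\gamma(\tau)(-\xp{\vu_s})
\]
with $\alpha(\tau),\beta(\tau),\gamma(\tau)>0$. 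This is the asymptotic analog of the observation used tacitly in the proof of Theorem~\ref{thm:tangentA}: the four null vectors $\xm{\vu},\xp{\vu_t},\xp{\vu_\tau},\xp{\vu_s}$ all lie on the affine slice of the lightcone where the third coordinate equals $1$, the normalization of the path forces $\xp{\vu_\tau}$ to traverse a convex arc on this slice monotonically in $\tau$, while $\xm{\vu}$ sits on the opposite side of the chord joining $\xp{\vu_t}$ and $\xp{\vu_s}$.

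Granted the claim, integrating the decomposition of $\dot{p}_\tau$ yields $p_s-p_t=A\,\xm{\vu}+B(-\xp{\vu_t})+C(-\xp{\vu_s})$ with
\[
A=\int_t^s\!\bigl(a_\tau+b_\tau\alpha(\tau)\bigr)\,d\tau,\qquad B=\int_t^s\! b_\tau\beta(\tau)\,d\tau,\qquad C=\int_t^s\! b_\tau\gamma(\tau)\,d\tau.
\]
All three coefficients are strictly positive because $b_\tau>0$ throughout and $\alpha,\beta,\gamma$ are positive on $(t,s)$. Hence $p_s-p_t$ lies in the interior of the cone and the crooked planes are disjoint.

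The main obstacle I anticipate is the positivity claim on $\alpha(\tau), \beta(\tau), \gamma(\tau)$: although direct verification is immediate in the parabolic model of \S\ref{sec:par} (where the slice is a unit circle and the arc geometry is transparent), the general case requires a careful argument about the relative position of the arc $\tau\mapsto\xp{\vu_\tau}$ on the normalized lightcone slice with respect to the fixed point $\xm{\vu}$, drawing on convexity of the lightcone slice and the monotonicity of the path forced by the normalization condition.
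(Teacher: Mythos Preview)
Your proposal is correct and is exactly the modification of the proof of Theorem~\ref{thm:tangentA} that the paper has in mind; indeed the paper gives no separate argument at all, merely stating that ``the proof of Theorem~\ref{thm:tangentA} can be modified to yield'' the result and appending a \qed. You have in fact supplied considerably more detail than the paper does---in particular the decomposition of $-\xp{\vu_\tau}$ with all three coefficients strictly positive, and the reason this forces the integral into the \emph{interior} of the three-sided cone even when $a_\tau$ vanishes identically---so the ``main obstacle'' you flag is real but already adequately handled by the convexity-of-the-lightcone-slice argument you sketch.
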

Let us now apply this condition to the orbit $p_t$.
\begin{align*}
\dot{p}_t\cdot\vu_t& =\begin{bmatrix}t & 1& 0\end{bmatrix} \begin{bmatrix} 0 & 0& 2 \\ 0& 1 & 0 \\ 2 & 0 & 4 \end{bmatrix}\begin{bmatrix} 1 & t -\frac{1}{2} & -t^2+t-\frac{1}{6}\\ 0&1& -2t+1 \\0&0& 1 \end{bmatrix}\begin{bmatrix} a \\ b \\ c \end{bmatrix} \\
& =b+c.
\end{align*}
Thus $\dot{p}_t\in\vu_t^\perp$ if and only if $b=-c$.

Similar calculations yield\,:
\begin{align*}
 \dot{p}_t{\cdot}\xm{\vu}_t  & = 2c \\
  \dot{p}_t{\cdot}\xp{\vu}_t & = \frac{-2}{t^2+1}\left(a+\frac{4}{3}c\right).
\end{align*}
Therefore, $\dot{p}_t\in\Quad(\vu_t)\setminus\R_+\xm{\vu_t}$ if and only if\,:
\begin{align*}
c & > 0 \\
a & \geq -\frac{4}{3}c.
\end{align*}
Figure~\ref{fig:par} depicts such a crooked foliation.

\begin{figure}
  \includegraphics{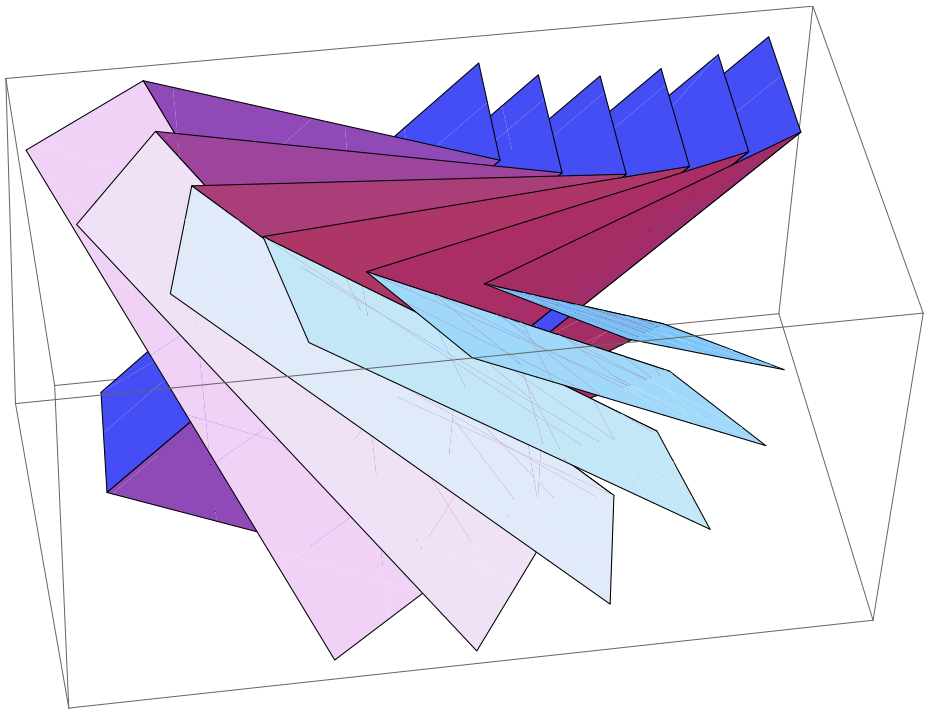}
  \caption{A one-parameter crooked foliation along an orbit in the parabolic case.}
  \label{fig:par}
  \end{figure}

\section{Remainders\,: hyperbolic+asymptotic}

One-parameter hyperbolic groups correspond quite naturally to foliations by pairwise ultraparallel crooked planes, and one-parameter parabolic groups, by pairwise asymptotic crooked planes.  Indeed,  these are the kinds of foliations which ``fill up'' the hyperbolic plane.  But one might ask, are those the only possibilities?

Take, for instance, a one-parameter parabolic subgroup $\langle\gamma_t\rangle\subset\Ise$ and let $g$ be the linear part of $\gamma_1$.  Suppose $\vu_t=\gamma_t(\vu_0)$ is a path of spacelike vectors; $\R\xo{g}$ being the unique invariant line for the $g$-action, $\vu_t^\perp\cap g(\vu_t)^\perp$ will be timelike, unless $\xo{g}\in\vu_t^\perp$.  This means that any crooked plane with director $\vu_t$ will intersect every other crooked plane in its orbit.  (The same thing happens in the hyperbolic plane.)  Therefore, no parabolic orbit curve will admit a one-parameter foliation by crooked planes with ultraparallel directors.

However, the affine setting allows us more flexibility in the hyperbolic case.  Let $\langle\gamma_t\rangle\subset\Ise$ be a one-parameter hyperbolic group with generalized Margulis invariant $\mu$.  Let the linear part $g_t$, $t\in\R$, be as in Equation~\eqref{normalized-hyperbolic} and set as before $g=g_1$.  But this time, set\,:
\begin{equation*}
\vu_t = \begin{bmatrix} e^{lt} \\ 1 \\ e^{lt}\end{bmatrix} .
\end{equation*}
Observe that $\xp{\vu_t}=\xp{g} =(1, 0,1)$ for all $t\in\R$ and\,:
\begin{equation*}
\xm{\vu_t}=\begin{bmatrix} \frac{1-e^{-2lt}}{1+e^{-2lt}} \\ \frac{2e^{-lt}}{1+e^{-2lt}} \\ 1\end{bmatrix}.
\end{equation*}
Furthermore, as $t$ goes from $-\infty$ to $\infty$, the path $\vu_t$ goes from $\xo{g}$ and asymptotically approaches the line spanned by $\xp{g}$.   This corresponds to a foliation of a halfplane in the hyperbolic plane bounded by the invariant axis for $g$.  We will display a finite set of orbit curves $p_t$ admitting a one-parameter foliation by crooked planes with asymptotic directors.

\subsection{Case 1\,: $p_t\subset l_\gamma$}

Since $\dot{p}_t=\begin{bmatrix}0\\ \alpha\\ 0\end{bmatrix}$, $\dot{p}_t\cdot\xp{\vu_t}$ is identically zero.  Thus $l_\gamma$ does not admit a one-parameter foliation by crooked planes with asymptotic directors.

\subsection{Case 2\,: $p_t\subset\WP^\pm$}

Following~\eqref{W-orbit}, $p_t\subset\WP^+$ can be written as\,:
\begin{equation*}
p_t=\left( ke^{ lt}, \alpha t, ke^{ lt}\right)
\end{equation*}
where $k\neq 0$.  But then $\dot{p}_t\cdot\vu_t=\alpha\neq 0$.  Thus a one-parameter foliation here is not possible.  On the other hand, if $p_t\in\WP^-$\,:
\begin{equation*}
p_t=\left( ke^{ -lt}, \alpha t, - ke^{- lt}\right)
\end{equation*}
where $k\neq 0$, then $\dot{p}_t\cdot\vu_t=\alpha-2kl$, which is equal to 0 if and only if $k=\frac{\mu}{2}$.  Substituting this value into the expression for $p_t$, we find\,:
\begin{align}
\dot{p}_t\cdot\xp{\vu_t} & =-{\alpha}e^{-lt} <0\\
 \dot{p}_t\cdot\xm{\vu_t} & =\frac{ \alpha}{e^{lt}+e^{-lt}}>0.
 \end{align}
Therefore, the curve\,:
\begin{equation*}
p_t=\left(\frac{\mu}{2}e^{ -lt}, \alpha t, - \frac{\mu}{2}e^{- lt}\right)
\end{equation*}
admits a one-parameter crooked foliation.

\subsection{Case 3\,: $p_t\subset\TS$}

Recall that an arbitrary orbit in $\TS$ can be written as in~\eqref{Torbit} and therefore\,:
\begin{align*}
\dot{p}_t\cdot\vu_t & = \alpha+kle^{lt}(\cosh l(t+t_0)-\sinh l(t+t_0)) \\
& = \alpha+kle^{lt-l(t+t_0)} \\
& =  \alpha+kle^{-lt_0} .
\end{align*}
Therefore $\dot{p}_t\in(\vu_t)^\perp$ if and only if $k=-\mu e^{lt_0}$.  Substituting this value into the expression for $p_t$, we verify that\,:
\begin{align*}
\dot{p}_t\cdot\xp{\vu_t} & =-\alpha e^{-lt} <0\\
 \dot{p}_t\cdot\xm{\vu_t} & = \alpha\frac{1+e^{2lt_0}}{e^{lt}+e^{-lt}}>0.
 \end{align*}
 This shows that the curve\,:
 \begin{equation*}
 p_t=\left(-\mu e^{lt_0}\sinh l(t+t_0),\alpha t, -\mu e^{lt_0}\cosh l(t+t_0)\right)
 \end{equation*}
is the unique orbit in $\TS$ admitting a one-parameter crooked foliation.

\subsection{Case 4\,: $p_t\subset\SS$}

Let $k\neq 0$ and let $p_t\in\SS_{\abs{k}}$ as in~\eqref{eq:S-orbit}.  Then $\dot{p}_t\cdot\vu_t=0$ if and only if $k=\mu e^{lt_0}$.  Thus the same calculations as above yield that the curve\,:
 \begin{equation*}
 p_t=\left(\mu e^{lt_0}\cosh l(t+t_0),\alpha t, \mu e^{lt_0}\sinh l(t+t_0)\right)
 \end{equation*}
is the unique orbit in $\SS$ admitting a one-parameter crooked foliation.

We have summarized  all the possibilities in Table 1.

\begin{table}\label{allcurves}
\begin{center}
\begin{tabular}{ |c|c|c| }\hline
                   &  ultra parallel   &  asymptotic     \\ \hline[5pt]
   hyperbolic case &  $|k| < \mu$ & very rare     \\ \hline
   parabolic  case &  impossible      & $3 a+4 c>0$, $b=-c$, $c>0$ \\[5pt] \hline
\end{tabular}
\caption{ }
\end{center}
\end{table}



%

%





\section{Existence of crooked foliations for arbitrary pairs of disjoint crooked planes}\label{sec:when}

In this last section, we will use some of the machinery developed above in the following specific situation.  Suppose that $\vu_0,\vu_1\in\V$ are a pair of unit-spacelike, ultraparallel vectors and let $p_0,p_1\in\E$ such that\,:
\begin{equation*}
\CP(p_0,\vu_0)\cap\CP(p_1,\vu_1)=\emptyset.
\end{equation*}
We will give sufficient criteria for the existence of a one-parameter crooked foliation containing the pair of crooked planes.  To do this, we will place the vertices on an orbit curve for a one-parameter hyperbolic group.  We leave to the reader to see how the arguments could be adapted to a pair of asymptotic directors, with a one-parameter parabolic group.

Since $\vu_0,\vu_0$ are ultraparallel, they span an indefinite plane, which is the orthogonal plane to a spacelike vector.  Let $\vx\in\V$ be a unit-spacelike vector such that $\vu_1,\vu_2\in\vx^\perp$ and\,:
\begin{equation*}
(p_1-p_0)\cdot\vx>0.
\end{equation*}
Observe that $(p_1-p_0)\cdot\vx\neq 0$\,: indeed, $\vx$ is parallel to $\vu_0\times\vu_1$ and the left-hand side of the Drumm-Goldman inequality must be positive.

Let $\langle g_t\rangle$ be a one-parameter subgroup of $\SOto$ with fixed eigenvector $\vx$, such that\,:
\begin{equation*}
g_1(\vu_0)=\pm\vu_1.
\end{equation*}
The map $g_t$ is conjugate to the transformation in Equation~\eqref{normalized-hyperbolic} and we are simply adjusting the value of $l$.

Finally, replacing $\vu_0$ or $\vu_1$ by their opposites if necessary -- this will not change the crooked planes -- we may assume without loss of generality that $-\vu_0,\vu_1$ are consistently oriented.  Therefore,  we may unequivocally set\,:
\begin{equation}\label{eq:ut}
\vu_t=g_t(\vu_0)
\end{equation}
where $\vu_0=g_1^{-1}(\vu_1)$.  This is a normalized curve.

The next step is to find $\gamma_t$, with linear part $g_t$, such that $\gamma_1(p_0)=p_1$.  Clearly $g_t$ must be hyperbolic, since the directors $\vu_t$ are ultraparallel.  In~\S\ref{sec:hyp}, we saw that the only possibility for a one-parameter crooked foliation is when the orbit curve is in $\SS$ or along the invariant axis; the latter case arises when $p_1-p_0$ is parallel to $\xo{g}$ and we already know that $\lg$ admits a crooked foliation, so we will focus on the case where $p_0,p_1\in\SS$.  Thus there exists $t_0\in\R$ and $k\neq 0$ such that the orbit curve in question is as in~\eqref{eq:S-orbit}\,:
\begin{equation}\label{eq:pt}
p_t=\gamma_t(p_0)=\left( k\cosh(l(t+t_0)), \alpha t, k\sinh(l(t+t_0))\right) .
\end{equation}
We stress here that the value of $l$ is determined by $\vu_0$ and $\vu_1$; because of this, we may not assume that $t_0=0$.

\begin{defn}\label{calib}
Let $\vu_0,\vu_1\in\V$ be a pair of ultraparallel unit-spacelike vectors belonging to a normalized curve.  Let $p_0,p_1\in\E$ belong to the curve~\eqref{eq:pt}.  We say that $(p_0,\vu_0)$ and $(p_1,\vu_1)$ are a {\em calibrated pair} if and only if $t_0=0$.
\end{defn}

\begin{lemma}
Let $(p_i,\vu_i)$, $i=0,1$ be as in Definition~\ref{calib}.  The pair are calibrated if and only if\,:
\begin{equation*}
\ln\left(\frac{(p_1-p_0)\cdot\xm{g}}{(p_1-p_0)\cdot\xp{g}}\right)=l.
\end{equation*}
\end{lemma}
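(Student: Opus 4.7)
The plan is to evaluate both sides of the claimed equality directly from the explicit formula~\eqref{eq:pt} for the orbit curve. Under the normalization of $g_t$ in~\eqref{normalized-hyperbolic}, the eigenvectors of $g$ with eigenvalues $e^{\pm l}$ are $\xp{g}=(1,0,1)$ and $\xm{g}=(-1,0,1)$. Recalling that the scalar product on $\V$ has signature $(2,1)$ and that the middle component of $\xpm{g}$ is zero, the translational contribution $\alpha t$ in~\eqref{eq:pt} will drop out, so we get clean expressions in terms of $k$, $l$ and $t_0$ only.

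First I would substitute $t=0$ and $t=1$ into~\eqref{eq:pt} to form the difference
\begin{equation*}
p_1-p_0=\bigl(k[\cosh l(1+t_0)-\cosh lt_0],\,\alpha,\,k[\sinh l(1+t_0)-\sinh lt_0]\bigr).
\end{equation*}
Dotting with $\xp{g}=(1,0,1)$ collapses cosines minus sinhes into negative exponentials, giving
\begin{equation*}
(p_1-p_0)\cdot\xp{g}=ke^{-lt_0}(e^{-l}-1),
\end{equation*}
and similarly dotting with $\xm{g}=(-1,0,1)$ gives
\begin{equation*}
(p_1-p_0)\cdot\xm{g}=ke^{lt_0}(1-e^{l}).
\end{equation*}
A one-line manipulation shows $(1-e^l)/(e^{-l}-1)=e^l$, so the ratio simplifies to $e^{l(2t_0+1)}$, which is strictly positive (so the logarithm makes sense regardless of the sign of $k$).

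Taking logarithms yields
\begin{equation*}
\ln\!\left(\frac{(p_1-p_0)\cdot\xm{g}}{(p_1-p_0)\cdot\xp{g}}\right)=l(2t_0+1),
\end{equation*}
and this equals $l$ if and only if $t_0=0$, which by Definition~\ref{calib} is exactly the calibrated condition. The only obstacle to this proof is bookkeeping with signs in the hyperbolic identities and verifying that the denominator is nonzero; both are dispatched by $l>0$ and $k\neq 0$ (the latter being built into the assumption $p_0\in\SS$), so the argument is essentially a direct calculation with no genuine obstacle.
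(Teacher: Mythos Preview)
Your proof is correct. Both your argument and the paper's are direct computations, but they are organized differently: the paper first decomposes $p_1-p_0=\alpha\,\xo{g}+k_1\xp{g}+k_2\xm{g}$ in the eigenbasis, notes that the ratio in the statement equals $k_1/k_2$, and then solves for $k_1$ (and $k$) in terms of $k,l,t_0$ to arrive at $t_0=-\tfrac{1}{2l}\bigl[\ln(k_1/k_2)-l\bigr]$. You instead compute the two dot products straight from the parametrization~\eqref{eq:pt} in standard coordinates and obtain the ratio $e^{l(2t_0+1)}$ in one stroke. Your route is the more streamlined of the two, bypassing the auxiliary coefficients $k_1,k_2$ and the formula $k=\pm\dfrac{2\sqrt{k_1k_2}}{e^{l/2}-e^{-l/2}}$; the paper's eigenbasis viewpoint, on the other hand, makes the positivity of the ratio conceptually clear (it is $k_1/k_2$ with $k_1k_2>0$ coming from $p_0,p_1\in\SS$), whereas you read positivity off a posteriori from the explicit exponential form.
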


\begin{proof}
Write $p_1-p_0=\alpha\xo{g}+k_1\xp{g}+k_2\xm{g}$, $k_1,k_2\in\R$.  Since $p_0,p_1\in\SS$, $k_1k_2>0$.  In particular\,:
\begin{align*}
(p_1-p_0)\cdot\xm{g} & =k_1\xp{g}\cdot\xm{g} \\
(p_1-p_0)\cdot\xp{g} & =k_2\xp{g}\cdot\xm{g}.
\end{align*}
A simple calculation shows that\,:
\begin{equation*}
k=\pm\frac{2\sqrt{k_1k_2}}{e^{l/2}-e^{-l/2}}.
\end{equation*}
To simplify the argument, we will consider the case where $k>0$.

The first component of $p_1-p_0$ in the standard basis is $k_1-k_2$ and the third component is $k_1+k_2$.  Therefore\,:
\begin{equation*}
2k_1=ke^{lt_0}(e^l-1).
\end{equation*}
Therefore\,:
\begin{equation*}
t_0=-\frac{1}{2l}\left[ \ln\frac{k_1}{k_2}-l\right]
\end{equation*}
and the result follows.
\end{proof}
%
\begin{thm}\label{1-2foliate}
Let  $(p_0,\vu_0)$ and $(p_1,\vu_1)$ be a calibrated pair and suppose that
$\CP(p_0,\vu_0)$, $\CP(p_1,\vu_1)$ are disjoint crooked planes.  Then there exists a crooked foliation containing them.
\end{thm}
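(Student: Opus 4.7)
The plan is to construct the crooked foliation as a one-parameter orbit foliation, using the hyperbolic subgroup $\langle \gamma_t \rangle$ already set up in Section~\ref{sec:when}. The linear part $g_t$ is fixed by $g_1(\vu_0) = \vu_1$, and the translational part by $\gamma_1(p_0) = p_1$. By the calibration hypothesis, the resulting orbit $p_t = \gamma_t(p_0)$ takes the normal form~\eqref{eq:pt} with $t_0 = 0$, namely $p_t = (k \cosh lt, \alpha t, k \sinh lt)$, and $p_0, p_1 \in \SS_k$ for some $k > 0$.

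With this setup in place, Theorem~\ref{thm:SSorbit} reduces the problem to verifying the bound $k \leq \mu$, where $\mu = \alpha/l$ is the generalized Margulis invariant. To establish this, I would plug the calibrated normal form into the Drumm-Goldman inequality~\eqref{DGinequality} for $\CP(p_0, \vu_0), \CP(p_1, \vu_1)$ with the consistently oriented pair $(-\vu_0, \vu_1)$; a direct cross-product and Lorentz inner-product calculation collapses the disjointness hypothesis to $\alpha \sinh l > 2k(\cosh l - 1)$. The next move is to promote this to the stronger bound $\alpha \geq kl$ via a hyperbolic comparison in the spirit of the Taylor-expansion computation in Example~\ref{alpha=kl}. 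Once $k \leq \mu$ is in hand, Theorem~\ref{thm:SSorbit} immediately produces a one-parameter crooked foliation $\{\CP(p_t, \vu_t)\}_{t \in \R}$ along the orbit, containing $\CP(p_0, \vu_0)$ at $t = 0$ and $\CP(p_1, \vu_1)$ at $t = 1$.

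The main obstacle is this promotion: the single-pair inequality $\alpha \sinh l > 2k(\cosh l - 1)$ is formally weaker than $\alpha \geq kl$, since $l > 2\tanh(l/2)$ for $l > 0$, so a pointwise comparison argument must exploit the calibration structure in a nontrivial way. If this direct route does not close the gap, the fallback is to abandon the orbit curve and build a regular vertex curve $\tilde p_t : \R \to \E$ with $\dot{\tilde p}_t \in \Quad(\vu_t)$ passing through $p_0, p_1$: by Theorem~\ref{thm:allowable}, disjointness places $p_1 - p_0$ in $\inte{\Quad(\vu_0) + \Quad(\vu_1)}$, which sits inside the convex cone of realizable integrals $\int_0^1 f(\tau)\, d\tau$ with $f(\tau) \in \Quad(\vu_\tau)$; a smoothing-and-perturbation argument on $[0,1]$, extended by $\dot{\tilde p}_t = \xm{\vu_t}$ for $t \notin [0,1]$, yields a regular $\tilde p_t$ on all of $\R$, and Theorem~\ref{thm:tangentA} then promotes it to the crooked foliation $\{\CP(\tilde p_t, \vu_t)\}_{t \in \R}$.
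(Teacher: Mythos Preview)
Your primary route is exactly the paper's: use the orbit curve $p_t=\gamma_t(p_0)$ with $t_0=0$ and invoke Theorem~\ref{thm:SSorbit}. The paper's entire argument is that the infinitesimal condition $\dot p_t\in\Quad(\vu_t)$ reduces, when $t_0=0$, to the $t$-independent inequality $k\le\mu$, and ``therefore'' disjointness of the single pair forces disjointness of every pair along the orbit. You have correctly located the gap hiding in that ``therefore''. The Drumm--Goldman inequality for the pair at $s-t=1$ collapses to
\[
\mu\,l\sinh l \;>\; 2k\bigl(\cosh l - 1\bigr),
\]
and since $l\sinh l > 2(\cosh l -1)$ for all $l>0$ (this is precisely the content of Example~\ref{alpha=kl}), that inequality is strictly \emph{weaker} than $k\le\mu$. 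Concretely, take $l=1$, $k=1$, $\mu=0.95$: then $0.95\sinh 1\approx 1.117>1.086\approx 2(\cosh 1-1)$, so $\CP(p_0,\vu_0)$ and $\CP(p_1,\vu_1)$ are disjoint, yet $k>\mu$ and one checks that the Drumm--Goldman inequality \emph{fails} along the orbit at $s-t=0.1$. So the promotion you worry about is impossible, the orbit approach cannot be rescued, and the paper's proof, read literally, shares the same gap.

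Your fallback is therefore the substantive argument, and it is both correct in outline and genuinely different from what the paper attempts. Theorem~\ref{thm:allowable} places $p_1-p_0$ in $\inte{\Quad(\vu_0)+\Quad(\vu_1)}$; since for each $\tau\in(0,1)$ the vectors $\vu_\tau^{-}$ and $-\vu_\tau^{+}$ are strictly positive combinations of $\vu_0^{-},\vu_1^{-},-\vu_0^{+},-\vu_1^{+}$, the (linear, hugely under-determined) equation
\[
\int_0^1\bigl(a(\tau)\,\vu_\tau^{-}-b(\tau)\,\vu_\tau^{+}\bigr)\,d\tau \;=\; p_1-p_0
\]
is solvable in smooth positive $a,b$. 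Setting $\tilde p_t=p_0+\int_0^t\bigl(a(\tau)\vu_\tau^{-}-b(\tau)\vu_\tau^{+}\bigr)d\tau$ on $[0,1]$ and extending smoothly with $\dot{\tilde p}_t\in\Quad(\vu_t)$ outside, Theorem~\ref{thm:tangentA} produces a crooked foliation through both planes. This is not a one-parameter (orbit) foliation, but the theorem as stated only asks for a crooked foliation, so your fallback actually proves it where the paper's argument does not.
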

\begin{proof}
Let $\langle\gamma_t\rangle$ be the one-parameter subgroup generated by $\gamma_1$ described above.  The condition for a one-parameter crooked foliation along the orbit $\gamma_t(p_0)$, assuming that $t_0=0$, is independent of $t$, as the calculations leading to Theorem~\ref{thm:SSorbit} show.  Therefore, if $\CP(p_0,\vu_0)$, $\CP(p_1,\vu_1)$ are disjoint, then so are every pair of crooked planes along the same orbit.
\end{proof}

\begin{rem}
The condition that we have a calibrated pair is necessary for this argument in order to use Theorem~\ref{thm:SSorbit}; if $t_0\neq 0$, we will get intersecting crooked planes for small differences in $t$, even if the original pair is disjoint.
\end{rem}

 \bibliographystyle{amsplain}
 \bibliography{Vref}

\end{document}